\pgfplotsset{compat=1.15}
\newtheorem{theorem}{Theorem}[section]
\newtheorem{lemma}[theorem]{Lemma}
\newtheorem{corollary}[theorem]{Corollary}
\newtheorem{proposition}[theorem]{Proposition}
\theoremstyle{definition}
\newtheorem{definition}[theorem]{Definition}
\newtheorem{remark}{Remark}[section]
\newtheorem{example}{Example}[theorem]
\date{}
\begin{document}
	
	\title{Extensions of Steiner triple systems\footnote{AMS MSC 05B07, 20N05, 51E10\newline Keywords: Steiner triple systems, Steiner loops, Veblen points, Schreier extensions.}}
	
	\maketitle	\noindent	
	{{Giovanni Falcone}
		\\	\footnotesize{Dipartimento di Matematica e Informatica}\\
		\footnotesize{Universit\`a degli Studi di Palermo, Via Archirafi 34, 90123 Palermo, Italy}\\
		\footnotesize{giovanni.falcone@unipa.it}}
	
	\bigskip\noindent
	{{Agota Figula}
		\\	\footnotesize{Institute of Mathematics}\\
		\footnotesize{University of Debrecen, P.O. Box 400,	H-4002 Debrecen, Hungary}\\
		\footnotesize{figula@science.unideb.hu}}

	\bigskip\noindent
	{{Mario Galici}
		\\
		\footnotesize{Dipartimento di Matematica e Informatica}\\
		\footnotesize{Universit\`a degli Studi di Palermo, Via Archirafi 34, 90123 Palermo, Italy}\\
		\footnotesize{mario.galici@unipa.it}}

	\begin{abstract}
			In this article we study extensions of Steiner triple systems by means of the associated Steiner loops. We recognize that the set of Veblen points of a Steiner triple system corresponds to the center of the Steiner loop. We investigate extensions of Steiner loops, focusing in particular on the case of Schreier extensions, which provide a powerful method for constructing Steiner triple systems containing Veblen points. 
	\end{abstract}
	
	\section*{Introduction}
	Although it has been known since the 50's of the last century (\cite{hale}, \cite{bruckbook}) that any Steiner triple system can be associated with a commutative loop, an extension theory for Steiner triple systems has not been explored. 
	 
	 In this paper, we study Steiner triple systems by means of their associated Steiner loops, using a classic algebraic technique. Indeed, we reduce their structure to that of suitable normal subloops and the corresponding quotient loops. Currently, a general extension theory for loops does not exist. The lack of associativity makes the situation less controllable and very different from the theory of group extensions. Many authors have contributed to this field, working on specific kinds of extensions for some classes of loops. Examples include nuclear extensions \cite{drapalvojtconstruction}, \cite{kinyonphillipsvojt}, Schreier extensions \cite{NagyStrambach}, or the well known Chein's extensions for Moufang loops \cite{Chein} (see also \cite{CheinPflugfelderSmith}, \cite{DharwadkerSmith}, \cite{nagyextinvolbol}, \cite{EilenbergMacLane}, \cite{JohnsonLeedhamgreen}, \cite{GaliciNagy}).
	 
	 This paper primarily focuses on Schreier extensions of Steiner loops, which offer a method for constructing and classifying specific Steiner triple systems which present similarities to the point-line designs of projective spaces over $\mathrm{GF}(2)$. Indeed, we distinguish the case where the normal subloop is central: after showing that central elements correspond to \emph{Veblen points} (see \Cref{defveblen}), we introduce an extension theory inspired by the cohomology theory for commutative groups. This specific theory provides a constructive approach to describe Steiner triple systems containing Veblen points. In particular, the set of Veblen points turns out to be a Steiner triple subsystem of size $2^n-1$ which is the point-line design of a projective space over the field $\mathrm{GF}(2)$. The whole Steiner loop, in this case, is a Schreier extension of its center by the corresponding quotient loop, and it can be described by a factor system $f$ as in \Cref{lemmatriples}. In particular, in \cite{FilipponeGalici} this method is applied to classify (respectively, enumerate) Steiner triple systems of order $19$ (resp. of order $27$ and $31$), containing Veblen points. In \Cref{maxnumbveblenpoints}, we set a threshold for the maximum number of Veblen points in a non-projective Steiner triple system, culminating in \Cref{numberveblenpg}: if the order of a Steiner triple system is between $2^{n}-1$ and $2^{n+1}-1$, and if it contains at least $2^{n-3}$ Veblen points, then it is isomorphic to $\mathrm{PG}(n-1,2)$.
	 
	 In \Cref{cohomology}, we face the problem of defining equivalent and isomorphic extensions.  Examples are given. 
	
	Recursive methods for constructing "products" of Steiner triple systems are well known \cite[Ch. 3]{ColbournRosa}. However, only one of these, the so-called \emph{doubling} construction, coincides with the extension provided by our more general construction, using a \emph{Steiner operator}, in the trivial case where the subloop has index two. Indeed, such a subloop is necessarily  normal and corresponds to a \emph{projective hyperplane}, a topic first studied by Teirlinck \cite{teirlinck} and later by Doyen, Hubaut, and Vandensavel \cite{DHV}. In the latter case, the problem of classifying $\mathrm{STS}(v)$s containing a projective hyperplane is reduced, into the present frame, to classifying $\mathrm{STS}(\frac{v-1}{2})$s and symmetric Latin squares on $\frac{v-1}{2}$ letters with a fixed element in the main diagonal.

	\section{General facts}\label{sec:generalfacts}

	A \emph{Steiner triple system} ($\mathrm{STS}$ for short) $(\mathcal{S},\mathcal{T})$ consists of a set $\mathcal{S}$ of $v$ elements (\emph{points}) and a family $\mathcal{T}$ of $3$-subsets of $\mathcal{S}$, called \emph{triples} (also \emph{blocks} or \emph{lines}), with the property that every $2$-subset of $\mathcal{S}$ occurs in exactly one triple of $\mathcal{T}$. The size $v$ of the set $\mathcal{S}$ is called the \emph{order} of the Steiner triple system. Throughout this paper we will refer to a Steiner triple system $(\mathcal{S},\mathcal{T})$ simply by its set of points $\mathcal{S}$. For a general reference we address the reader to \cite{ColbournDinitz} and \cite{ColbournRosa}.
	
	A Steiner triple system of order $v$ (denoted by $\mathrm{STS}(v)$) exists if and only if $v\equiv 1, 3 \pmod 6$, and in this case, $v$ is said \emph{admissible}. The total number of blocks of an $\mathrm{STS}(v)$ is $b=\frac{v(v-1)}{6}$. Note that we also consider the trivial cases of the $\mathrm{STS}(1)$ with one point and no blocks, and of the $\mathrm{STS}(3)$ with three points and a unique triple. Classic examples of Steiner triple systems include the \emph{projective} and the \emph{affine} systems. The former are the point-line designs of projective spaces $\mathrm{PG}(n,2)$ over $\mathrm{GF}(2)$, while the latter are the point-line designs of affine spaces $\mathrm{AG}(n,3)$  over $\mathrm{GF}(3)$.

	A \emph{quasigroup} is a set $L$ endowed with a binary operation $x\cdot y$ such that the equations $a\cdot x=b$ and $y\cdot a=b$ have unique solutions $x$, and $y$. The solutions are denoted by divisions on the left, and on the right, $x=a\backslash b$, and $y=b/a$. \emph{Loops} are quasigroups with an identity element $\Omega$. The multiplication sign is often ignored, whereas $(x\cdot y)\cdot z$ is written as $xy\cdot z$. A loop operation does not need to be associative: if associativity holds, the loop is actually a group. For a general reference, we address the reader to \cite{pflugfelder} and \cite{bruckbook}.
	
	A loop homomorphism is a map $\varphi\colon L_1\rightarrow L_2$ between two loops such that $\varphi(xy)=\varphi(x)\varphi(y)$ for all $x,y\in L_1$. Loop homomorphisms preserve left and right division, as well as the identity element. A subloop $N\leq L$ is \emph{normal} if it is the kernel of a homomorphism or, equivalently, if the  relations
	\begin{equation*}
		x N=N x, \quad x\cdot N y=x N\cdot y, \quad x\cdot y N=x y\cdot N,
	\end{equation*}
	hold for any $x,y\in L$. If $L$ is commutative, the three normality conditions reduce to $x\cdot y N=x y \cdot N$. 

	The \emph{center} of a loop $L$, i.e.
	\begin{equation*}
		Z(L)=\{z\in L \mid za=az, \ a\cdot zb=az\cdot b=z\cdot ab, \text{ for all } a,b\in L\}
	\end{equation*}
	is always a normal subloop in $L$, which actually is a subgroup. If $L$ is commutative, it is simply the subgroup  
		\begin{equation*}
		Z(L)=\{z\in L \mid \ a\cdot zb=az\cdot b=z\cdot ab, \text{ for all } a,b \in L\}.
	\end{equation*}
	
	A loop $L$ is said \emph{totally symmetric} if it is commutative and the identity 
	\begin{equation}
		x\cdot xy=y
	\end{equation}
	holds for every $x,y\in L$. In a totally symmetric loop, the left and right inverses $x\backslash \Omega$ and $\Omega/x$ of each element $x$ coincide; we call this the \emph{inverse} of $x$, denoted by $x^{-1}$. Every totally symmetric  loop has exponent two. For loops of exponent two, the totally symmetric property $x\cdot xy=y$ is equivalent to the weak associativity which says that $x \cdot yz=\Omega$ whenever $xy \cdot z=\Omega$, for any $x,y,z\in L$.

	\section{Steiner loops}
	
	Steiner loops are precisely the finite totally symmetric loops defined in \Cref{sec:generalfacts}. In fact, as early as 1958, Bruck observed in \cite{bruckbook} that a totally symmetric loop is essentially the algebraic version of a Steiner triple system. Here we recall the definition.
	
	\begin{definition}\label{steinerloopdef}
		Consider a Steiner triple system $\mathcal{S}$ and let $\Omega$ be a further element not belonging to $\mathcal{S}$. We define $\mathcal{L_S}$ as the set $\mathcal{S} \cup\{{\Omega}\}$ endowed with the binary operation $\cdot $ described as follows:
		\begin{itemize}
			\item for any distinct $x$ and $y$ in $\mathcal{S}$, the product $x \cdot y$ is defined as the third point in the triple of $\mathcal{S}$ containing $x$ and $y$;
			\item for any $x \in \mathcal{L_S}$, we set $x^2=\Omega$ and $x \cdot \Omega=\Omega \cdot x=x$.
		\end{itemize}
		$\mathcal{L_S}$ is called a \emph{Steiner loop}.
	\end{definition}
	 These concepts have been studied, for example, in \cite{quackenbush}, \cite{Strambach}, \cite{steinerloopsmougangth}, \cite{nilpotentloopsclass2}, \cite{hale}, and \cite{orientedsts}.  A Steiner loop $\mathcal{L_S}$ turns out to be a group, specifically an elementary abelian 2-group, precisely when the Steiner triple system $\mathcal{S}$ is projective (\cite{dipaolagroup}, see also \cite{burattinakic}). 
	 
	 \begin{remark}
	 	Actually, given a Steiner triple system, there are (at least) two ways to define an operation that gives a loop structure. In the case of \Cref{steinerloopdef}, the unit is an additional element, while in the other constructions the identity element is a chosen point within the Steiner triple system. Both kinds of loops already appear in \cite{Chein}, and one of the second type is the object of \cite{steinerloopsaffine}.
	 	
	 \end{remark}

	Let $\varphi$ be a homomorphism between two Steiner loops $\mathcal{L}_{\mathcal{S}_1}$ and $\mathcal{L}_{\mathcal{S}_2}$ with identities $\Omega_1$ and $\Omega_2$. If $\{a_1,a_2,a_3\}$ is a triple of $\mathcal{S}_1$ then, since $a_1a_2a_3=\Omega_1$, either $\{\varphi(a_1),\varphi(a_2),\varphi(a_3)\}$ is a triple of $\mathcal{S}_2$ or $\varphi(a_i)=\Omega_2$ and $\varphi(a_j)=\varphi(a_k)$, with $\{i,j,k\}=\{1,2,3\}$. 
	
	If $\mathcal{S}_1$ and $\mathcal{S}_2$ have the same order, then the isomorphisms of loops $\mathcal{L}_{\mathcal{S}_1}\leftrightarrows\mathcal{L}_{\mathcal{S}_2}$ correspond exactly to the isomorphisms of Steiner triple systems $\mathcal{S}_1\leftrightarrows\mathcal{S}_2$. Naturally, the group $\mathrm{Aut}(\mathcal{L_S})$ can be identified with $\mathrm{Aut}(\mathcal{S})$.
	
	Moreover, there is a one-to-one correspondence between subloops of $\mathcal{L_S}$ and Steiner triple subsystems of $\mathcal{S}$, and, if a subloop is normal, the quotient loop gives in turn another Steiner triple system, as shown in \Cref{Correspondence}.

		\begin{theorem}\label{Correspondence} 
		Let $\mathcal{S}$ be a Steiner triple system and $\mathcal{L_S}$ the corresponding Steiner loop with identity $\Omega$.
		\begin{itemize}
			\item[i)] $\mathcal{L}'$ is a subloop of $\mathcal{L_S}$ if and only if it is the Steiner loop $\mathcal{L_R}$ associated with a subsystem $\mathcal{R}$ of $\mathcal{S}$.
			\item[ii)] If $\mathcal{L_N}$ is a normal subloop of $\mathcal{L_S}$, then
			each nontrivial coset $x \mathcal{L_N}$ generates a subsystem of $\mathcal{S}$ containing $\mathcal{N}$.
			\item[iii)] If $\mathcal{L_N}$ is a normal subloop of $\mathcal{L_S},$ then the factor loop $\mathcal{L_S}/\mathcal{L_N}$ is a Steiner loop $\mathcal{L_Q}$, where $\mathcal{Q}$ is the Steiner triple system consisting of the nontrivial cosets of $\mathcal{L_N}$.
		\end{itemize}
	\end{theorem}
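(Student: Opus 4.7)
The plan is to exploit the fact that, in the Steiner loop $\mathcal{L_S}$, every non-identity element has order two, so each triple $\{x,y,z\}$ of $\mathcal{S}$ is encoded by the single algebraic relation $xy=z$ (equivalently $xyz=\Omega$). This lets me translate the combinatorial claims about subsystems, cosets, and quotient systems into purely algebraic statements about subloops of $\mathcal{L_S}$.

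For part (i), given a subloop $\mathcal{L}'\leq\mathcal{L_S}$, I set $\mathcal{R}:=\mathcal{L}'\setminus\{\Omega\}$. For distinct $x,y\in\mathcal{R}$ the product $xy$ lies in $\mathcal{L}'$ and cannot equal $\Omega$ (otherwise $y=x^{-1}=x$), so $xy\in\mathcal{R}$ and the triple $\{x,y,xy\}$ of $\mathcal{S}$ is contained in $\mathcal{R}$; hence $\mathcal{R}$ is a Steiner subsystem. Conversely, any subsystem $\mathcal{R}$ is closed under the Steiner operation, so $\mathcal{L_R}:=\mathcal{R}\cup\{\Omega\}$ is a subloop of $\mathcal{L_S}$.

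For part (ii), I would first verify that $H:=\mathcal{L_N}\cup x\mathcal{L_N}$ is a subloop. Since $\mathcal{L_N}$ is normal, coset products are well-defined in the quotient, and combined with $x^2=\Omega$ this forces $(x\mathcal{L_N})\cdot(x\mathcal{L_N})=\mathcal{L_N}$, while the remaining coset products obviously stay inside $H$. By part (i), $H$ corresponds to a subsystem of $\mathcal{S}$. To see that this subsystem contains $\mathcal{N}$, I would pick any $n\in\mathcal{N}$ and note that $xn\in x\mathcal{L_N}$ with $xn\ne x$ (as $n\ne\Omega$); total symmetry then yields $x\cdot(xn)=n$, so $n$ is the third point of the triple determined by the pair $\{x,xn\}\subseteq x\mathcal{L_N}$, and therefore lies in the subsystem generated by the coset.

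For part (iii), I would observe that commutativity and the totally symmetric identity $x\cdot xy=y$ are both preserved by the canonical homomorphism $\mathcal{L_S}\to\mathcal{L_S}/\mathcal{L_N}$, so the (finite) quotient is again a totally symmetric loop, hence a Steiner loop $\mathcal{L_Q}$, whose underlying Steiner triple system $\mathcal{Q}$ has the nontrivial cosets of $\mathcal{L_N}$ as points and triples $\{A,B,AB\}$ determined by the coset multiplication. The step requiring the most care is part (ii): because $\mathcal{L_S}$ is not associative, the key containment $(xn_1)(xn_2)\in\mathcal{L_N}$ is not automatic from $x^2=\Omega$ and must be extracted from the normality relation $x\cdot y\mathcal{L_N}=xy\cdot\mathcal{L_N}$, which is exactly the well-definedness of the quotient multiplication.
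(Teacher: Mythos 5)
Your proposal is correct and follows essentially the same route as the paper: part (i) by translating closure under the loop operation into closure under taking third points, part (ii) by using normality to get $(x\mathcal{L_N})(x\mathcal{L_N})=\mathcal{L_N}$ and $(x\mathcal{L_N})\mathcal{L_N}=x\mathcal{L_N}$ so that $\mathcal{N}\cup x\mathcal{L_N}$ is the generated subsystem, and part (iii) by noting the quotient is again a finite totally symmetric loop. Your use of $x\cdot(xn)=n$ to see that every point of $\mathcal{N}$ actually lies in the subsystem generated by the coset is a slightly more explicit justification of a step the paper leaves implicit, but it is not a different argument.
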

	
	\begin{proof}
		\begin{itemize}
			\item[i)] $\mathcal{L}'$ is a subloop of $\mathcal{L_S}$ if and only if it is closed under the operation of $\mathcal{L_S}$, which is equivalent to saying that if two different elements of $\mathcal{S}$ are contained in $\mathcal{R}:=\mathcal{L}'\setminus\{\Omega\}$, then the third point $z=xy$ of the triple through $x$ and $y$ is also in $\mathcal{R}$.
				
			\item[ii)]
			Let $\mathcal{L_N}$ be a normal subloop of $\mathcal{L_S}$ and $x\notin\mathcal{L_N}$.
			The nontrivial coset $x\mathcal{L_N}$ generates the Steiner triple subsystem $x\mathcal{L_N}\cup \mathcal{N}$. 
				
			In fact, if $x n_1$ and $x n_2$ are two distinct elements of $x \mathcal{L_N}$, then $(x n_1) \cdot (x n_2)= n_3 \in \mathcal{L_N}$, since $(x \mathcal{L_N}) \cdot (x \mathcal{L_N})=\mathcal{L_N}$. This means that the third point in the triple through $x n_1$ and $x n_2$ is an element of $\mathcal{N}$. Hence, the subsystem of $\mathcal{S}$ generated by $\mathcal{N}$ must be contained in $x\mathcal{L_N}\cup \mathcal{N}$.
			Moreover, if $x n_1 \in x\mathcal{L_N}$ and $n_2 \in \mathcal{N}$, then we have that $(x n_1) \cdot n_2=x n_3$ since $(x \mathcal{L_N}) \cdot \mathcal{L_N}= x \mathcal{L_N}$, that is	$\{x n_1, n_2, x n_3 \}$ is a triple $\mathcal{S}$ contained in $\mathcal{N} \cup x \mathcal{L_N}$. 
			This proves that $x\mathcal{L_N}$ generates the subsystem $x\mathcal{L_N}\cup \mathcal{N}$. Note that the order of $\mathcal{N} \cup x \mathcal{L_N}$ is admissible: indeed, if $w$ is the size of $\mathcal{N}$, then $|\mathcal{N} \cup x \mathcal{L_N}|=2w+1 \equiv 3$ or $1\pmod 6$ whenever $w \equiv 1$ or $3\pmod 6$, respectively.
				
			\item[iii)] The last assertion follows from the fact that the quotient $\mathcal{L_S}/\mathcal{L_N}$ is a finite totally symmetric loop.
		\end{itemize}
	\end{proof}
	
	If $\mathcal{L_N}$ is a normal subloop of $\mathcal{L_S}$ and $\mathcal{L_Q}$ is the corresponding quotient loop, we say that $\mathcal{N}$ is a \emph{normal subsystem} of $\mathcal{S}$ and $\mathcal{Q}$ is the corresponding \emph{quotient system}.  Of course, normal subsystems correspond to kernels of epimorphisms.

		\begin{definition}
		Let $v$ be the order of a Steiner triple system. We say that $v+1=(u+1)(w+1)$ is an \emph{admissible factorization} if $u$ and $w$ are admissible in the sense of Steiner triple systems. 	
	\end{definition}
	
	\begin{example}
		Since the factorization $14=2 \cdot 7$ is not admissible, we can say that the two non-isomorphic $\mathrm{STS}(13)$s cannot have nontrivial normal subsystems, or equivalently, the corresponding Steiner loops are simple.
	\end{example}
	
	A class of subsystems that are always normal is that of \emph{projective hyperplanes} which were studied first by Teirlinck \cite{teirlinck} and later by Doyen, Hubaut, and Vandensavel \cite{DHV}.
	
	\begin{definition}
		A proper subsystem $\mathcal{N}$ of $\mathcal{S}$ is called a \emph{projective hyperplane} if every triple of $\mathcal{S}$ has a nonempty intersection with $\mathcal{N}$.
	\end{definition}

	Equivalently, a subsystem $\mathcal{N}$ of an
	$\mathrm{STS}(v)$ is a projective hyperplane if and only if $|{\mathcal N}|=\frac{v-1}{2}$. Naturally, projective hyperplanes correspond exactly to subloops of index $2$, which are always normal. 
	
	On the other hand, when we raise the index to $4$, a subloop of a Steiner loop $\mathcal{L_S}$ is not necessarily normal, as shown in the following example. 
	
	\begin{example}
		Let $\mathcal{S}$ be the $\mathrm{STS}(15)$ with triples given by the columns of \Cref{tab:sts15.2}. In the classification of the $80$ non-isomorphic Steiner triple systems of order $15$ in \cite{ColbournDinitz}, pp. 31-33, it is presented as $\# 2$.
		\begin{table}[H]
			\centering
			\begin{tabular}{ccccccccccccccccccccccccccccccccccc}
			$0 $\!\! & \!\!$ 0 $\!\! & \!\!$ 0 $\!\! & \!\!$ 0 $\!\! & \!\!$ 0 $\!\! & \!\!$ 0 $\!\! & \!\!$ 0 $\!\! & \!\!$ 1 $\!\! & \!\!$ 1 $\!\! & \!\!$ 1 $\!\! & \!\!$ 1 $\!\! & \!\!$ 1 $\!\! & \!\!$ 1 $\!\! & \!\!$ 2 $\!\! & \!\!$ 2 $\!\! & \!\!$ 2 $\!\! & \!\!$ 2 $\!\! & \!\!$ 2 $\!\! & \!\!$ 2 $\!\! & \!\!$ 3 $\!\! & \!\!$ 3 $\!\! & \!\!$ 3 $\!\! & \!\!$ 3 $\!\! & \!\!$ 4 $\!\! & \!\!$ 4 $\!\! & \!\!$ 4 $\!\! & \!\!$ 4 $\!\! & \!\!$ 5 $\!\! & \!\!$ 5 $\!\! & \!\!$ 5 $\!\! & \!\!$ 5 $\!\! & \!\!$ 6 $\!\! & \!\!$ 6 $\!\! & \!\!$ 6 $\!\! & \!\!$ 6$ \\
			$1 $\!\! & \!\!$ 3 $\!\! & \!\!$ 5 $\!\! & \!\!$ 7 $\!\! & \!\!$ 9 $\!\! & \!\!$ b $\!\! & \!\!$ d $\!\! & \!\!$ 3 $\!\! & \!\!$ 4 $\!\! & \!\!$ 7 $\!\! & \!\!$ 8 $\!\! & \!\!$ b $\!\! & \!\!$ c $\!\! & \!\!$ 3 $\!\! & \!\!$ 4 $\!\! & \!\!$ 7 $\!\! & \!\!$ 8 $\!\! & \!\!$ b $\!\! & \!\!$ c $\!\! & \!\!$ 7 $\!\! & \!\!$ 8 $\!\! & \!\!$ 9 $\!\! & \!\!$ a $\!\! & \!\!$ 7 $\!\! & \!\!$ 8 $\!\! & \!\!$ 9 $\!\! & \!\!$ a $\!\! & \!\!$ 7 $\!\! & \!\!$ 8 $\!\! & \!\!$ 9 $\!\! & \!\!$ a $\!\! & \!\!$ 7 $\!\! & \!\!$ 8 $\!\! & \!\!$ 9 $\!\! & \!\!$ a$ \\
			$2 $\!\! & \!\!$ 4 $\!\! & \!\!$ 6 $\!\! & \!\!$ 8 $\!\! & \!\!$ a $\!\! & \!\!$ c $\!\! & \!\!$ e $\!\! & \!\!$ 5 $\!\! & \!\!$ 6 $\!\! & \!\!$ 9 $\!\! & \!\!$ a $\!\! & \!\!$ d $\!\! & \!\!$ e $\!\! & \!\!$ 6 $\!\! & \!\!$ 5 $\!\! & \!\!$ a $\!\! & \!\!$ 9 $\!\! & \!\!$ e $\!\! & \!\!$ d $\!\! & \!\!$ b $\!\! & \!\!$ c $\!\! & \!\!$ d $\!\! & \!\!$ e $\!\! & \!\!$ c $\!\! & \!\!$ b $\!\! & \!\!$ e $\!\! & \!\!$ d $\!\! & \!\!$ e $\!\! & \!\!$ d $\!\! & \!\!$ c $\!\! & \!\!$ b $\!\! & \!\!$ d $\!\! & \!\!$ e $\!\! & \!\!$ b $\!\! & \!\!$ c$ \\
			\end{tabular}
			\caption{$\mathrm{STS}(15)$ $\# 2$}\label{tab:sts15.2}
		\end{table}
		Any triple $\mathcal{N}$ of $\mathcal{S}$ gives a subloop $\mathcal{L_N}<\mathcal{L_S}$ of index four. Let $\mathcal{N}$ be, for instance, the triple $\{5,9,c\}$. Normality requires that for any $x,y\in\mathcal{L_S}$ and any $n_1\in\mathcal{L_N}$, $x(yn_1)=(xy)n_2$ for some $n_2\in\mathcal{L_N}$. If we choose $x=3$, $y=1$, $n_1=9$, then 
		\begin{equation*}
			x(yn_1)=3(1\cdot 9)=3\cdot 7=b,
		\end{equation*}
		but the equation $(3\cdot 1)n_2=b$, being equivalent to $5\cdot n_2=b$, leads to $n_2=5\cdot b =a$, which is not an element of $\mathcal{L_N}$.
	\end{example}
	
	We want to describe normality of subloops with a combinatorial perspective, especially in small cases. Before going into this, it is important to note that the center $\mathcal{Z}$ of a Steiner loop $\mathcal{L_S}$  has order $2^t$, for some non-negative integer $t$, since it is an elementary abelian $2$-group. Also, since $\mathcal{Z}$ is a normal subloop, its order must divide that of $\mathcal{L_S}$. Thus, if the center is not trivial, then $\mathcal{L_S}$ has an admissible factorization $v+1=2^t(w+1)$ with $t\geq 1$.

	\begin{definition}{\cite[p. 147]{ColbournRosa}}\label{defveblen}
		A point $x$ in a Steiner triple system $\mathcal{S}$ is a \emph{Veblen point} if whenever $\{x,a,b\}$, $\{x,c,d\}$, $\{y,a,c\}$ are triples of $\mathcal{S}$, also $\{y,b,d\}$ is a triple of $\mathcal{S}$.
	\end{definition}
	
 	\Cref{defveblen} says that any two different triples through a Veblen point $x$ produce a Pasch configuration, as in \Cref{fig:pasch_antipasch_conf}.
	
	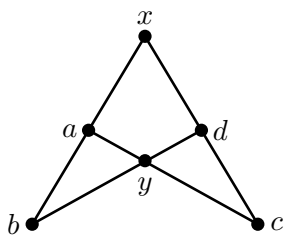
\begin{figure}[H]
		\begin{center}
			\begin{tikzpicture}[scale=0.25] 
				\draw[line width=1] (-6,0) -- (0,10);
				\draw[line width=1] (6,0) -- (0,10);
				\draw[line width=1] (-6,0) -- (3,5);
				\draw[line width=1] (6,0) -- (-3,5);

				\fill (-6,0) circle (10pt);
				\fill (6,0) circle (10pt);
				\fill (0,10) circle (10pt);
				\fill (3,5) circle (10pt);
				\fill (0,3.38) circle (10pt);
				\fill (-3,5) circle (10pt);
				
				\node at (0,11) {$x$};
				\node at (0,2) {$y$};
				\node at (-7,0) {$b$};
				\node at (7,0) {$c$};
				\node at (-4,5) {$a$};
				\node at (4,5) {$d$};
			\end{tikzpicture}\caption{Pasch configuration}
			\label{fig:pasch_antipasch_conf}
		\end{center}
	\end{figure}
	\noindent Equivalently we can say that $x$ is a Veblen point if and only if any two different triples containing $x$ generate an $\mathrm{STS}(7)$. 
	
	\begin{remark}
	Note indeed that, if in a Pasch configuration there is a Veblen point, it generates a Fano plane, but in general it is not true that a Pash configuration is necessarily contained in a Fano plane. A result concerning this topic deals with Steiner loops satisfying \emph{Moufang's theorem}, that is, every three associating elements generate a group. It is known that a Steiner loop is a Moufang loop if and only if it is associative. However, some non-associative Steiner loops satisfy Moufang's theorem. In \cite{steinerloopsmougangth}, the authors proved that a Steiner loop $\mathcal{L_S}$ satisfies Moufang's theorem if and only if every Pasch configuration in $\mathcal{S}$ generates a sub-$\mathrm{STS}(7)$.
	\end{remark}
	
	Veblen points give a characterization of projective Steiner triple systems, which in its original form was one of the \emph{Veblen-Young axioms} for projective spaces (see \cite{veblenyoung}). However, the version we report here was presented by C.J. Colbourn and A. Rosa.
	
	\begin{theorem}{\cite[Th. 8.15]{ColbournRosa}}\label{thmVeblenYoung}
		Let $\mathcal{S}$ be a Steiner triple system of order $v$, and suppose that $2^n\leq v < 2^{n+1}$. The system $\mathcal{S}$ is isomorphic to $\mathrm{PG}(n+1,2)$, and $v=2^{n+1}-1$, if and only if every element of $\mathcal{S}$ is a Veblen point.
	\end{theorem}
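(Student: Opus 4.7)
The plan is to handle the two implications separately. The forward direction follows immediately from the group structure of the Steiner loop of a projective space, while the reverse direction amounts to proving that the Veblen property at every point forces associativity of $\mathcal{L_S}$.

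If $\mathcal{S}\cong\mathrm{PG}(n+1,2)$, then $v=2^{n+1}-1$ and $\mathcal{L_S}$ is an elementary abelian $2$-group (as recalled after \Cref{steinerloopdef}), so every triple has the additive shape $\{u,v,u+v\}$. Given triples $\{x,a,b\}$, $\{x,c,d\}$, $\{y,a,c\}$, we have $b=x+a$, $d=x+c$, $y=a+c$, whence $y+b+d=0$ and $\{y,b,d\}$ is a triple; hence every point is a Veblen point.

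For the converse, assume every point of $\mathcal{S}$ is a Veblen point. The key step is to prove that $\mathcal{L_S}$ is associative, i.e.\ that $(xy)z=x(yz)$ for all $x,y,z$. Once this is established, $\mathcal{L_S}$ is a commutative group of exponent $2$, hence elementary abelian of some order $2^{k}$, so $\mathcal{S}$ is projective; the bounds $2^{n}\leq v=2^{k}-1<2^{n+1}$ then force $k=n+1$ and $\mathcal{S}\cong\mathrm{PG}(n+1,2)$.

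To prove associativity, I first dispose of the degenerate situations---when one of $x,y,z$ equals $\Omega$, when two of them coincide, or when $\{x,y,z\}$ is already a triple---in which the desired equation collapses to an identity enforced by total symmetry. In the remaining generic case, one verifies that the six points $x,y,z,xy,yz,x(yz)$ are pairwise distinct in $\mathcal{S}$, and considers the three triples
\begin{equation*}
\{x,y,xy\},\qquad \{x,yz,x(yz)\},\qquad \{z,y,yz\}.
\end{equation*}
The first two pass through $x$; the third meets them at $y$ and at $yz$, and contains the auxiliary point $y\cdot yz=z$. Applying \Cref{defveblen} at $x$ with $a=y$, $b=xy$, $c=yz$, $d=x(yz)$, we conclude that $\{z,xy,x(yz)\}$ is a triple, i.e.\ $(xy)\cdot z=x\cdot(yz)$. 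The main obstacle is not the Veblen step itself but the bookkeeping that secures the distinctness of the three triples above and prevents $z$ from lying on the first two---both being conditions needed for \Cref{defveblen} to be applicable---but this check relies only on total symmetry and the non-triple hypothesis on $\{x,y,z\}$.
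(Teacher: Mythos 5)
Your proof is correct, but note that the paper itself offers no proof to compare against: the theorem is quoted from Colbourn--Rosa, so your argument stands or falls on its own. It stands. The forward direction is the standard computation in the elementary abelian model. For the converse, your Pasch configuration is legitimately applicable in the generic case: the three triples $\{x,y,xy\}$, $\{x,yz,x(yz)\}$ and $\{z,y,yz\}$ are genuine and pairwise suitable once $x,y,z$ are distinct, none is $\Omega$, and $\{x,y,z\}$ is not a triple (e.g.\ $y=x(yz)$ would force $xy=yz$ and hence $x=z$ by total symmetry; $z=xy$ or $x=yz$ is the excluded triple case; $xy=x(yz)$ forces $z=\Omega$), and the conclusion $\{z,xy,x(yz)\}\in\mathcal{T}$ is precisely $(xy)z=x(yz)$. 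Two remarks. First, within this paper the converse compresses to one line: by \Cref{veblencenter} a Veblen point is exactly a nontrivial central element, so if every point is Veblen then $\mathcal{L_S}=Z(\mathcal{L_S})$ is an elementary abelian $2$-group and $\mathcal{S}$ is projective; your configuration argument is in effect a direct combinatorial re-derivation of the implication ``$x$ central $\Rightarrow$ $x\cdot yz=(xy)z$'', which is fine but duplicates work the paper has already done. Second, a cosmetic issue you inherit from the statement as printed: a projective system on $2^{n+1}-1$ points is $\mathrm{PG}(n,2)$, not $\mathrm{PG}(n+1,2)$ (compare the indexing in \Cref{numberveblenpg}); your conclusion $|\mathcal{L_S}|=2^{n+1}$ is right, and the resulting geometry should be labelled $\mathrm{PG}(n,2)$.
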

	
	\Cref{thmVeblenYoung} will be improved by \Cref{maxnumbveblenpoints}, but first we need to show some results about normality and Veblen points. The next result explicitly characterizes normal subsystems consisting of a singleton, and also gives an algebraic meaning to Veblen points in the context of Steiner loops, showing that they correspond to the center of the loop. 
	
	\begin{theorem}\label{veblencenter}
		Let $\mathcal{S}$ be a Steiner triple system. The following are equivalent.
		\begin{enumerate}[(i)]
			\item The sub-$\mathrm{STS}(1)$ $\mathcal{N}=\{x\}$ is a normal subsystem of $\mathcal{S}$; \label{thm.veb.cond1}
			\item $x$ is a Veblen point of $\mathcal{S}$;\label{thm.veb.cond2}
			\item $x$ is a (nontrivial) central element of $\mathcal{L_S}$. \label{thm.veb.cond3}
		\end{enumerate}
	\end{theorem}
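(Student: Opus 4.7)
The plan is to prove (i) $\Leftrightarrow$ (iii) and (ii) $\Leftrightarrow$ (iii) separately. First I would note that $\mathcal{L_N}=\{\Omega,x\}$ is automatically a subloop of $\mathcal{L_S}$, since $x^2=\Omega$ forces closure. Thus the content of (i) is just that this two-element subloop is \emph{normal}.

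For (i) $\Leftrightarrow$ (iii): since $\mathcal{L_S}$ is commutative, normality of $\mathcal{L_N}$ reduces (by the simplified condition recalled in \Cref{sec:generalfacts}) to the equality of the two-element sets $a\cdot b\mathcal{L_N}=\{ab,\,a(bx)\}$ and $ab\cdot\mathcal{L_N}=\{ab,\,(ab)x\}$ for every $a,b\in\mathcal{L_S}$. Both contain $ab$, so we must have either $a(bx)=(ab)x$ or $a(bx)=ab$; the latter, by left cancellation in the loop, forces $bx=b$, hence $x=\Omega$, contradicting $x\in\mathcal{S}$. Therefore normality is equivalent to $a(bx)=(ab)x$ for all $a,b$, which together with commutativity is precisely the centrality of $x$.

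For (ii) $\Leftrightarrow$ (iii): the triples $\{x,a,b\}$, $\{x,c,d\}$, $\{y,a,c\}$ translate to the loop relations $b=xa$, $d=xc$, $c=ya$, and the Veblen conclusion $\{y,b,d\}$ translates to $yb=d$. Substituting $b$, $c$, $d$, the Veblen condition becomes exactly $y(xa)=x(ya)$, which together with commutativity is the statement that $x$ is central. Direction (iii) $\Rightarrow$ (ii) follows immediately. For (ii) $\Rightarrow$ (iii), I would verify $y(xa)=x(ya)$ for all $y,a\in\mathcal{L_S}$: the Veblen hypothesis handles the generic case $y\neq\Omega,x,a$, $a\neq\Omega,x$, and $ya\neq x$, after checking that the three configurations are genuine triples; the remaining degenerate cases ($y$ or $a$ equal to $\Omega$, to $x$, or to each other, and the collapse $ya=x$) are dispatched by one-line computations using $x^2=\Omega$ and commutativity, in each of which the identity becomes trivial or both sides equal $\Omega$.

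The main obstacle will be the systematic bookkeeping in (ii) $\Rightarrow$ (iii): \Cref{defveblen} strictly requires the three hypothesized sets to be actual triples, i.e. to consist of three distinct points of $\mathcal{S}$, so before invoking it one has to verify distinctness (e.g.\ $ya\neq x$) and treat by hand the sporadic collapses. Once these are cleared, the equivalence of the Veblen property and centrality is a direct translation.
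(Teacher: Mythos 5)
Your proposal is correct and follows essentially the same route as the paper: both reduce (i) $\Leftrightarrow$ (iii) to the single commutative normality identity $a\cdot bx=ab\cdot x$, and both obtain (ii) $\Leftrightarrow$ (iii) by translating the four Veblen triples into the loop relation $y\cdot xa=x\cdot ya$. Your treatment is in fact slightly more thorough than the paper's, which asserts the coset identity without the cancellation argument and leaves the degenerate collapses in (ii) $\Rightarrow$ (iii) implicit.
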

	\begin{proof}
		Let $\mathcal{N}=\{x\}$  be a normal subsystem of $\mathcal{S}$. The normality condition of the corresponding Steiner subloop $\mathcal{L_N}$
		is, in this case,
		\begin{equation*}
			a\cdot b x=ab\cdot x \quad \text{for all } a,b\in\mathcal{L_S}.
		\end{equation*} 
		Hence, \eqref{thm.veb.cond1} and \eqref{thm.veb.cond3} are equivalent. 
		
		Let now $x$ be a Veblen point of $\mathcal{L_S}$. Consider two triples $\{x,a,ax\}$ and $\{x,b,bx\}$. Since $\{a,bx,a\cdot bx\}$ is a triple, then $\{ax,b, a\cdot bx\}$ is a triple as well, which means that $ax\cdot b=a \cdot xb$. 
		This proves the equivalence between \eqref{thm.veb.cond2} and \eqref{thm.veb.cond3}.
	\end{proof}
	
	The following result is a direct consequence of \Cref{veblencenter}, more precisely it follows from the fact that the set of Veblen points of a Steiner triple system is exactly the set of nontrivial central elements of the associated Steiner loop.
	
	\begin{corollary}
		The Veblen points of a Steiner triple system of order $v$ form a normal subsystem of order $2^{n+1}-1\leq v$ isomorphic to $\mathrm{PG}(n,2)$.
	\end{corollary}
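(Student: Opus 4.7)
The plan is to combine \Cref{veblencenter} with structural facts about the center of a Steiner loop already recorded earlier in the paper. By \Cref{veblencenter}, the set $\mathcal{V}$ of Veblen points of $\mathcal{S}$ coincides with $\mathcal{Z}(\mathcal{L_S})\setminus\{\Omega\}$, so establishing the corollary reduces to identifying the structure of $\mathcal{Z}:=\mathcal{Z}(\mathcal{L_S})$, regarded as a Steiner subloop of $\mathcal{L_S}$, together with its associated subsystem.

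First I would invoke the fact recalled in \Cref{sec:generalfacts} that the center of any loop is a normal subloop. Hence $\mathcal{Z}$ is a normal Steiner subloop of $\mathcal{L_S}$, and \Cref{Correspondence}(i) and (iii) then yield that $\mathcal{V}=\mathcal{Z}\setminus\{\Omega\}$ is a normal subsystem of $\mathcal{S}$; the containment $\mathcal{V}\subseteq\mathcal{S}$ forces $|\mathcal{V}|\leq v$.

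Next I would use the observation stated in the paragraph preceding \Cref{defveblen}: since the center of a Steiner loop is an elementary abelian $2$-group, one has $|\mathcal{Z}|=2^{n+1}$ for some integer $n\geq 0$ when $\mathcal{Z}$ is nontrivial (with $n=-1$ in the degenerate case $\mathcal{V}=\emptyset$). Consequently $|\mathcal{V}|=2^{n+1}-1$, which gives the claimed numerical bound $2^{n+1}-1\leq v$.

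Finally, to identify $\mathcal{V}$ with $\mathrm{PG}(n,2)$ I would apply the classical result of di~Paola cited in the paper, which asserts that a Steiner loop is an elementary abelian $2$-group precisely when its underlying Steiner triple system is projective, i.e. isomorphic to the point-line design of some $\mathrm{PG}(k,2)$. Since $\mathcal{Z}$ itself is such a group of order $2^{n+1}$, the Steiner triple system $\mathcal{V}$ associated with $\mathcal{Z}$ must be isomorphic to $\mathrm{PG}(n,2)$. There is no real obstacle here; the corollary is essentially an assembly of \Cref{veblencenter} with the already-cited characterizations of the center of a Steiner loop and of projective Steiner triple systems.
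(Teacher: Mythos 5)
Your proof is correct and follows essentially the same route the paper intends: it identifies the Veblen points with the nontrivial central elements via \Cref{veblencenter}, then uses the normality of the center, its structure as an elementary abelian $2$-group of order $2^{n+1}$, and di Paola's characterization of projective systems to conclude. The paper states the corollary as a direct consequence of \Cref{veblencenter} without spelling out these steps, so your writeup is simply a fuller version of the same argument.
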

	
	We also want to describe the normality of subsystems of order three in combinatorial terms.

	\begin{theorem} Let $\mathcal{S}$ be a Steiner triple system. If a triple is normal, then any outer point together with it generates a Fano plane.
	\end{theorem}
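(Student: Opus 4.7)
The plan is to apply Theorem 1.1(ii) directly and then use the uniqueness of the Steiner triple system of order $7$. Write the normal triple as $\mathcal{N}=\{a,b,c\}$, so that the associated Steiner subloop is $\mathcal{L}_\mathcal{N}=\{\Omega,a,b,c\}$, with $ab=c$, $ac=b$, $bc=a$. Let $x\in \mathcal{S}\setminus\mathcal{N}$ be an outer point, and consider the nontrivial coset $x\mathcal{L}_\mathcal{N}=\{x,\,xa,\,xb,\,xc\}$.

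By part (ii) of \Cref{Correspondence}, the coset $x\mathcal{L}_\mathcal{N}$ generates the Steiner triple subsystem $\mathcal{R}:=x\mathcal{L}_\mathcal{N}\cup\mathcal{N}$, whose order is $2\cdot 3+1=7$. First I would remark that $\mathcal{R}$ coincides with the subsystem generated by $\{x\}\cup\mathcal{N}$: on the one hand, the triples $\{x,a,xa\}$, $\{x,b,xb\}$, $\{x,c,xc\}$ force $xa,xb,xc$ to belong to any subsystem containing $x$ and $\mathcal{N}$, so $\mathcal{R}$ is contained in the subsystem generated by $\{x\}\cup\mathcal{N}$; on the other hand, $\mathcal{R}$ is itself a subsystem containing these four points, so the reverse inclusion holds.

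To conclude, I would invoke the classical fact that the Steiner triple system of order $7$ is unique up to isomorphism, and is precisely the Fano plane $\mathrm{PG}(2,2)$. Hence the subsystem generated by $\mathcal{N}$ together with any outer point $x$ is a Fano plane.

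There is essentially no obstacle in this argument: all the work has been done in \Cref{Correspondence}(ii), which provides both the size $2w+1$ of the generated subsystem and the explicit description of its points as $\mathcal{N}\cup x\mathcal{L}_\mathcal{N}$. The only non-trivial ingredient invoked externally is the uniqueness of the $\mathrm{STS}(7)$, which is standard.
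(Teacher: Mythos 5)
Your proof is correct, but it follows a genuinely different route from the paper's. The paper argues directly from the normality equations: it writes down the three conditions $a(xy)=(ax)n_1$, $a(yx)=(ay)n_2$, $(xa)(ya)=xn_3$ with $n_i\in\mathcal{L_N}$, rules out by case analysis all values except $n_1=y$, $n_2=x$, $n_3=y$, and reads off from the resulting identities the explicit triples of a Fano plane on the seven points. You instead invoke \Cref{Correspondence}(ii), which already establishes that the coset $x\mathcal{L_N}$ together with $\mathcal{N}$ is a subsystem of order $2\cdot 3+1=7$, observe (correctly, via the triples $\{x,a,xa\}$, etc.) that this subsystem coincides with the one generated by $\mathcal{N}\cup\{x\}$, and finish with the uniqueness of the $\mathrm{STS}(7)$. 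Your argument is shorter and more conceptual: it reuses machinery the paper has already proved and replaces the paper's somewhat informal ``it is easy to check that the only possibilities which do not lead to any contradiction are\dots'' with a clean counting-plus-uniqueness step. What the paper's computation buys in exchange is the explicit incidence structure --- it tells you exactly which seven triples form the Fano plane (e.g.\ $\{a, xy, a(xy)\}$ with $a(xy)=(ax)y$), which is what makes the accompanying figure and later constructions concrete. Both proofs are valid; yours would be a legitimate alternative, provided one is content to cite the uniqueness of the $\mathrm{STS}(7)$ as a known external fact.
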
	
	\begin{proof}
		Let $\mathcal{N}=\{x,y,xy\}$ be a normal triple of $\mathcal{S}$ and $a$ an outer point. From the normality condition, we know that the solutions $n_1,n_2,n_3$ of the equations
		\begin{equation*}
			a(xy)=(ax)n_1, \quad a(yx)=(ay)n_2, \quad (xa)(ya)=xn_3,
		\end{equation*}
		belong to $\mathcal{L_N}$. It is easy to check that the only possibilities which do not lead to any contradiction are $n_1=y$, $n_2=x$ and $n_3=y$, giving the identities 
		\begin{equation*}
			a(xy)=(ax)y, \quad a(yx)=(ay)x, \quad (xa)(ya)=xy.
		\end{equation*}
		This means that $\mathcal{N}$ and $a$ generate a Fano plane, as shown in \Cref{fig:fanoplaneen}.
		\begin{figure}[H]
			\begin{center}
				\begin{tikzpicture}[scale=0.3]
					\draw[line width=1] (-6,0) -- (6,0) -- (0,10) -- cycle;
					\draw[line width=1] (-6,0) -- (3,5);
					\draw[line width=1] (6,0) -- (-3,5);
					\draw[line width=1] (0,10) -- (0,0);
					\draw[line width=1] (0,3.38) circle [radius=3.38];
					
					\fill (-6,0) circle (10pt);
					\fill (6,0) circle (10pt);
					\fill (0,10) circle (10pt);
					\fill (0,0) circle (10pt);
					\fill (3,5) circle (10pt);
					\fill (0,3.38) circle (10pt);
					\fill (-3,5) circle (10pt);

					\node at (0,11) {$x$};
					\node at (0,-1) {$xa$};
					\node at (1.5,3.38) {$a$};
					\node at (-7.5,0) {$xy$};
					\node at (7.5,0) {$ya$};
					\node at (-4,5) {$y$};
					\node at (5,5) {$axy$};
				\end{tikzpicture}\caption{Fano plane generated by a normal triple and an outer point}\label{fig:fanoplaneen}
			\end{center}
		\end{figure}
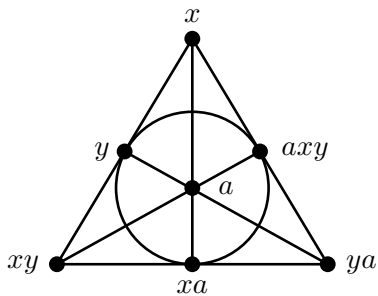
		\vspace{-0.6cm}
		\noindent Thus, the theorem is proved.
	\end{proof}
	
	Using the characterization of Veblen points in \Cref{veblencenter}, we can give a necessary condition on the existence of Steiner triple systems with such points. 
	
	\begin{proposition}\label{number of Veblen points1}
		If $v+1=2^t(w+1)$ is an admissible factorization only for $t=0$, then any $\mathrm{STS}(v)$ contains no Veblen points.
	\end{proposition}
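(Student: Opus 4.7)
The plan is to prove the contrapositive: if an $\mathrm{STS}(v)$ does contain a Veblen point, then $v+1$ admits an admissible factorization $2^t(w+1)$ with $t\geq 1$.

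First, I would invoke \Cref{veblencenter} to translate the existence of a Veblen point $x\in\mathcal{S}$ into the existence of a nontrivial central element $x\in Z(\mathcal{L_S})$. Denote the center by $\mathcal{Z}$. As recalled in the paragraph preceding \Cref{defveblen}, $\mathcal{Z}$ is an elementary abelian $2$-group, so $|\mathcal{Z}|=2^t$ for some non-negative integer $t$; since $x\neq\Omega$ belongs to $\mathcal{Z}$, we have $t\geq 1$.

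Next, because $\mathcal{Z}$ is a normal subloop of $\mathcal{L_S}$, Lagrange's theorem (which holds for subloops containing only the trivial sub-$\mathrm{STS}$ structure, and more directly follows from the fact that cosets partition $\mathcal{L_S}$) gives $2^t\mid v+1$. By \Cref{Correspondence}(iii), the quotient $\mathcal{L_S}/\mathcal{Z}$ is itself a Steiner loop $\mathcal{L_Q}$, hence of cardinality $w+1$ where $w$ is an admissible integer (the order of the Steiner triple system $\mathcal{Q}$). Comparing cardinalities yields
\begin{equation*}
	v+1 = 2^t(w+1), \qquad t\geq 1,
\end{equation*}
which is an admissible factorization with $t\geq 1$, contradicting the hypothesis.

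There is no real obstacle here: the proposition is essentially a packaging of \Cref{veblencenter} together with the elementary abelian $2$-group structure of the center and the admissibility of the quotient order guaranteed by \Cref{Correspondence}(iii). The only point to be careful about is to justify that the quotient order $(v+1)/2^t$ is of the form $w+1$ with $w$ admissible, but this is immediate once one recognises the quotient as a Steiner loop.
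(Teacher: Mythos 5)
Your proof is correct and follows essentially the same route as the paper's (one-sentence) proof: Veblen points are nontrivial central elements by \Cref{veblencenter}, the center is an elementary abelian $2$-group of order $2^t$ with $t\geq 1$, and the quotient by this normal subloop is a Steiner loop of admissible order $\frac{v+1}{2^t}$, yielding an admissible factorization with $t\geq 1$. You have merely spelled out, via the contrapositive, the details the paper leaves implicit.
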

	\begin{proof}
		The claim follows from the fact that, if the center has cardinality $2^t$, then $\frac{v+1}{2^t}$ must be the order of the quotient Steiner loop.
	\end{proof}
	
	After defining Schreier extension, in \Cref{section:schreierext}, we will be able to give a necessary and sufficient condition for the existence of Steiner triple systems of order $v$ with (at least) $2^t-1$ Veblen points, and we will present a constructive method for obtaining all such $\mathrm{STS}$s. 
	
	Now we prove a more general fact about Veblen points, Pasch configurations and Fano planes that can be useful in classifying Steiner triple systems with Veblen points.
	
	\begin{lemma} \label{paschconf} 
		If ${\mathcal S}$ is an $\mathrm{STS}(v)$, then:
		\begin{enumerate}[(i)]
			\item The number of Pasch configurations through a fixed Veblen point is $\frac{(v-1)(v-3)}{4}$.
			
			\item The number of Fano planes containing a fixed Veblen point is $\frac{(v-1)(v-3)}{24}$.
			
			\item If ${\mathcal S}$ has two distinct Veblen points $a,b$, then the third point $c=ab$ in their triple is a Veblen point as well. Moreover, there are $\frac{v-3}{4}$ Fano planes containing the triple $\ell=\{a,b,ab\}$.   
		\end{enumerate}
	\end{lemma}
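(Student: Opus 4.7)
The plan is to exploit the equivalent characterization of Veblen points already recalled in the excerpt: $x$ is a Veblen point of $\mathcal{S}$ if and only if any two distinct triples through $x$ generate a sub-$\mathrm{STS}(7)$. Write $r=(v-1)/2$ for the replication number, i.e.\ the number of triples through any fixed point.

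For (i) I would argue by double counting of incidences between Pasch configurations containing $x$ and pairs of triples through $x$. First observe that in every Pasch configuration each of its $6$ points lies on exactly $2$ of its $4$ triples, so a Pasch containing $x$ determines an unordered pair of triples through $x$ (the two triples of the Pasch through $x$). Conversely, I claim that each pair of distinct triples $T_1=\{x,a,b\}$, $T_2=\{x,c,d\}$ through $x$ gives rise to exactly two Pasch configurations containing both $T_1$ and $T_2$. The four outer points $\{a,b,c,d\}$ admit exactly three perfect matchings; after discarding the matching $\{a,b\}\cup\{c,d\}$ (which recovers the pair $T_1,T_2$ itself), the Veblen property at $x$ applied to the other two matchings produces two Pasch configurations: the third point $y$ of the triple $\overline{ac}$ equals the third point of the triple $\overline{bd}$, giving the Pasch $\{T_1,T_2,\{y,a,c\},\{y,b,d\}\}$, and symmetrically for $\{y',a,d\},\{y',b,c\}$. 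These Paschs are distinct, since $y=y'$ would force the lines $\overline{ac}$ and $\overline{ad}$ to share the two points $a$ and $y$, forcing $c=d$. Multiplying gives the count $2\binom{r}{2}=\frac{(v-1)(v-3)}{4}$.

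For (ii) I would keep the same setup and count Fano planes instead. Every Fano plane containing $x$ has exactly $3$ triples through $x$, hence contributes $\binom{3}{2}=3$ pairs of triples through $x$. Conversely, by the characterization recalled above, each pair of triples through $x$ generates a unique sub-$\mathrm{STS}(7)$. Dividing yields $\frac{1}{3}\binom{r}{2}=\frac{(v-1)(v-3)}{24}$.

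For (iii), the first statement follows at once from \Cref{veblencenter}: the Veblen points of $\mathcal{S}$ correspond to the nontrivial elements of the center $\mathcal{Z}(\mathcal{L_S})$, which is a subloop, hence closed under the loop operation; thus $c=ab$ is Veblen whenever $a$ and $b$ are. For the count of Fano planes through $\ell=\{a,b,ab\}$, I would refine the argument of (ii) by restricting the double count to pairs of triples through $a$ that include $\ell$. On the one hand, each Fano plane through $a$ that contains $\ell$ has two other triples through $a$, hence contributes $2$ such pairs $(\ell,T)$. On the other hand, each pair $(\ell,T)$ with $T$ a triple through $a$ different from $\ell$ generates, by the Veblen property at $a$, a unique Fano plane which automatically contains $\ell$. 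Since the number of choices for $T$ is $r-1=(v-3)/2$, the count is $\frac{1}{2}\cdot\frac{v-3}{2}=\frac{v-3}{4}$.

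The main technical step is the Pasch-counting part of (i): verifying that both nontrivial matchings of $\{a,b,c,d\}$ really close up into Paschs and that the two resulting Paschs are distinct. Everything else reduces to straightforward incidence counting based on the Fano-plane characterization of Veblen points.
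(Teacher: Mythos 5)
Your proof is correct, and while it rests on the same two ingredients as the paper's — that any two triples through a Veblen point generate a Fano plane and hence close up into Pasch configurations — the bookkeeping is organized differently. For (i) the paper parameterizes the Paschs through the Veblen point $a$ by the point $b$ diagonal to $a$ ($v-1$ choices) and then by one of the four remaining points up to a $4$-fold redundancy, giving $(v-1)\cdot\frac{v-3}{4}$; you instead double-count against unordered pairs of triples through $a$, checking that each pair closes up into exactly two Paschs (one for each nontrivial matching of the four outer points), giving $2\binom{(v-1)/2}{2}$. For (ii) the paper divides the Pasch count of (i) by $6$ (the six Paschs of a Fano plane through a given point), so its (ii) depends on (i); your count $\frac{1}{3}\binom{(v-1)/2}{2}$ bypasses Paschs entirely and is independent of (i), needing only that a Fano plane has three lines through each of its points, any two of which generate it. For (iii) the first claim is proved identically (the center is closed under the product), and the Fano count differs only in the fibration: the paper counts outer points modulo a $4$-fold redundancy, you count pairs $(\ell,T)$ modulo a $2$-fold redundancy, which also makes explicit that only the Veblen property at $a$ is used there. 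All numerical answers agree, and the one technical point you flag — that both nontrivial matchings really close up and yield distinct Paschs — is handled correctly.
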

	
	\begin{proof}
		If $a$ is a Veblen point, then for all points $b \in \mathrm{STS}(v)$, $a \neq b$, there are $\frac{v-3}{4}$ Pasch configurations through 
		$b$ and $a$ which do not contain the block $\{a, b, a b\}$. See \Cref{fig:paschnotab}.
		\begin{figure}[H]
			\begin{center}
					\begin{tikzpicture}[scale=0.25] 
					\draw[line width=1] (-6,0) -- (0,10);
					\draw[line width=1] (6,0) -- (0,10);
					\draw[line width=1] (-6,0) -- (3,5);
					\draw[line width=1] (6,0) -- (-3,5);

					\fill (-6,0) circle (10pt);
					\fill (6,0) circle (10pt);
					\fill (0,10) circle (10pt);
					\fill (3,5) circle (10pt);
					\fill (0,3.38) circle (10pt);
					\fill (-3,5) circle (10pt);
					
					\node at (0,11) {$a$};
					\node at (0,2) {$b$};
				\end{tikzpicture}\caption{Pasch configuration containing $a$ and $b$ but not their triple}\label{fig:paschnotab}
			\end{center}
		\end{figure}
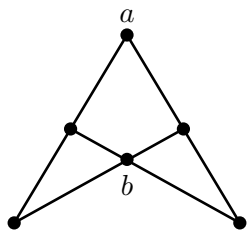
		This follows from the fact that we cannot choose $ab$ to be in the Pasch configuration, so we are left with $v-3$ points of the $\mathrm{STS}(v)$. In a Pasch configuration there are four further points other than $a$ and $b$. Fixing one of these four points, the others are uniquely determined: indeed, if we fix $x$ to be in the configuration, the others must necessarily be $ax$, $bx$, and $a(bx)=(ax)b$, and rearranging these four points we obtain the same configuration. Finally, since  the point $b$ can be chosen in $v-1$ different ways, we get the first assertion. 
		
		The second assertion follows from the fact that every sub-Fano plane containing the Veblen point $x$ is obtained by $6$ Pasch configurations.\footnote{Note that the number $\frac{(v-1)(v-3)}{24}$ is in fact an integer. Indeed, $v$ can be $1,3,7,9\pmod{12}$: if $v\equiv 1,3\pmod{12}$, one between $v-1$ and $v-3$ is divisible by $12$ and the other is even; if $v\equiv 7,9\pmod{12}$, one between $v-1$ and $v-3$ is divisible by $4$ and the other by $6$.}
		
		Now we proceed to prove the third assertion. Let $a$ and $b$ be two distinct Veblen points of $\mathcal{S}$. Since both points are central, their product $c=ab$ is still central and hence a Veblen point as well. Fixed a Veblen triple $\ell=\{a, b, ab\}$, let $x$ be a point of $\mathcal{S}$ not in $\ell$. Since $\ell$ is a triple of Veblen points, together with $x$ it generates a Fano plane.
		We can choose $x$ in $v-3$ different ways, but replacing it with $ax$, $bx$ or $(ab)x=b(ax)=a(bx)$ we obtain the same Fano plane. Hence there are $\frac{v-3}{4}$ different Fano planes containing the Veblen triple $\ell=\{a, b, ab\}$.
	\end{proof}

	\begin{remark}
	 	To illustrate that these results can simplify the counting of Veblen points in a Steiner triple system, we consider Steiner triple systems of order $15$. We use the classification of the $80$ non-isomorphic $\mathrm{STS}(15)$s and their main properties listed in \cite[Tables 1.28 and 1.29, pp. 30 - 32]{ColbournDinitz}
		
		If $\mathcal{S}$ is an $\mathrm{STS}(15)$ with a Veblen point,  then it contains at least $7$ Fano planes. The only $\mathrm{STS}(15)$s with this many sub-Fano planes are $\# 1$ and $\# 2$.  If $\mathcal{S}$ has more than one Veblen point, then it contains at least one triple of Veblen points. For each of these Veblen points, there are $7$ Fano planes containing it, $3$ of which contain the whole triple. Thus, $\mathcal{S}$ contains at least $3(7-3)+3=15$ Fano planes. The only $\mathrm{STS}(15)$ with this many sub-Fano planes is $\# 1$, that is, $\mathrm{PG}(3,2)$, therefore all of its elements are Veblen points. Moreover, the $\mathrm{STS}(15)$ $\# 2$ has precisely one Veblen point, and it is easy to see that it is the element labeled with $0$ by checking that it is a central element of the Steiner loop.
	\end{remark}

	\section{Schreier extensions of Steiner loops}\label{section:schreierext}
	
	The most general way to construct an extension of Steiner triple systems is via a \emph{Steiner operator}, as we show in \Cref{extensions}. However, in this section, our main focus is on \emph{Schreier extensions}, that provide a powerful tool for constructing and classifying Steiner triple systems containing Veblen points. This method proved to be effective, in fact in \cite{FilipponeGalici} it is applied to Steiner triple systems of order $19$, $27$ and $31$ containing Veblen points.
	
	Let $N$ be a group with identity $\Omega'$ and $Q$ be a loop with identity $\bar\Omega$. Consider a map $T\colon Q \to \mathrm{Aut}(N)$ with $T(\bar\Omega)=\mathrm{Id}$, and $f\colon Q\times Q \to N$ a function with the property $f(P,\bar \Omega)=f(\bar \Omega, P)=\Omega'$, for every $P\in Q$. From now on, we will use the additive notation for $N$ and the multiplicative notation for $Q$.
	The operation
	\begin{equation}
		\left(P,x\right)\circ\left(R,y\right):=\big(PR,f(P,R)+x^{T(R)}+y\big),
	\end{equation}
	defines on $Q\times N$ a loop $L$. The loop $L$ is an extension
	 of $N$ by $Q$ called a \emph{Schreier extension}. Indeed, the loop $L$ contains
	\begin{equation*}
		\overline{N}=\{(\bar \Omega, x)\mid x\in N\}\simeq N
	\end{equation*}
	as a normal subgroup with corresponding quotient loop isomorphic to $Q$. The function $f$ is called a \emph{factor system}, as in the theory of group extensions. 
	
	This construction is very similar to the corresponding definition for groups, but as we said, the lack of associativity makes the context less controllable. 
	\Cref{lemmatriples} gives necessary and sufficient conditions for a Schreier extension of Steiner loops to be a Steiner loop as well.

	\begin{theorem}\label{lemmatriples}
		A Schreier extension of an associative Steiner loop $\mathcal{L_N}$ by a Steiner loop $\mathcal{L_Q}$, defined by functions $T$ and $f$, gives a Steiner loop $\mathcal{L_S}$ if and only if the following hold:
		\begin{enumerate}
			\item $\mathcal{L_N}$ is central;
			\item $T$ is trivial;
			\item $f$ is symmetric and
			\begin{equation}\label{factosystem}
				f(P,Q)=f(P,PQ)=f(Q,PQ), \quad \text{for every } P,Q\in\mathcal{L_Q}.
			\end{equation}
		\end{enumerate}
		In particular, the operation of $\mathcal{L_S}$ becomes
		\begin{equation*}
			\left(P,x\right)\circ\left(Q,y\right)=\big(PQ,x+y  +f(P,Q)\big).
		\end{equation*}
	\end{theorem}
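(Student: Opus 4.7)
The strategy is to verify the three defining features of a Steiner loop, namely commutativity, exponent $2$, and the totally symmetric identity $X\cdot XY=Y$, by a direct computation with the Schreier product formula. I will derive the three stated conditions one at a time, in an order in which each successive simplification becomes legitimate: first exponent $2$ (which will force $T$ trivial), then commutativity (which forces $f$ symmetric), then the totally symmetric identity (which forces the value equation).

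First I compute the square: using $P^2=\bar\Omega$ in $\mathcal{L_Q}$,
\[
(P,x)\circ(P,x)=\big(\bar\Omega,\,f(P,P)+x^{T(P)}+x\big).
\]
Demanding this equal the identity for every $x\in N$ first gives $f(P,P)=\Omega'$ (setting $x=\Omega'$) and then $x^{T(P)}=-x=x$ for all $x$, since $N$ is an associative Steiner loop, hence an elementary abelian $2$-group. Thus $T(P)=\mathrm{Id}$ for every $P$, establishing condition (2). Once $T$ is trivial, the product simplifies to $(P,x)\circ(R,y)=(PR,\,f(P,R)+x+y)$, so commutativity of $\mathcal{L_S}$ is equivalent to $f(P,R)=f(R,P)$, the symmetry part of condition (3). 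Next, expanding $(P,x)\circ\big((P,x)\circ(R,y)\big)$, using $P\cdot(PR)=R$ in $\mathcal{L_Q}$ together with $x+x=\Omega'$, yields $\big(R,\,f(P,PR)+f(P,R)+y\big)$; imposing the totally symmetric identity forces $f(P,PR)=f(P,R)$, and the chain in (\ref{factosystem}) is closed by symmetry of $f$ combined with the symmetric role of $P$ and $R$ in the product $PR$, giving $f(R,PR)=f(R,P)=f(P,R)$.

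Condition (1), centrality of $\overline{N}\cong\mathcal{L_N}$ in $\mathcal{L_S}$, is then automatic: with $T$ trivial and with $f$ vanishing whenever one of its entries is $\bar\Omega$, a direct check shows that $\big((\bar\Omega,n)\circ(P,x)\big)\circ(R,y)$ and $(\bar\Omega,n)\circ\big((P,x)\circ(R,y)\big)$ both equal $(PR,\,n+f(P,R)+x+y)$, and the remaining associators are handled the same way, so $\overline{N}\subseteq Z(\mathcal{L_S})$. The sufficiency direction is obtained by reading the above computations backwards; one need not list $f(P,P)=\Omega'$ among the hypotheses since it follows from (\ref{factosystem}) upon setting $Q=P$ and combining $P^2=\bar\Omega$ with the defining identity $f(P,\bar\Omega)=\Omega'$ of a factor system. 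The main obstacle is purely organizational: the simplifications must be introduced in the correct sequence so that each successive condition on $f$ is tested against an already-simplified product, and one must keep the additive--multiplicative bookkeeping straight; no delicate loop-theoretic input is needed beyond the exponent-$2$ nature of $N$.
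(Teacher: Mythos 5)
Your proof is correct, and its computational core --- expanding $(P,x)\circ\big((P,x)\circ(Q,y)\big)$ to extract the condition $f(P,Q)=f(P,PQ)$ and closing the chain via symmetry of $f$ and commutativity of $\mathcal{L_Q}$ --- is exactly the computation the paper performs. The difference is in how the remaining conditions are obtained: the paper simply cites Proposition 3.2 of an external reference to conclude that $T$ is trivial, $f$ is symmetric, and $\mathcal{L_N}$ is central, whereas you derive all three from scratch. Your derivation is sound: squaring $(P,x)$ and using that $N$ is an elementary abelian $2$-group forces $T(P)=\mathrm{Id}$; once $T$ is trivial, commutativity of the extension is visibly equivalent to symmetry of $f$; and centrality of $\overline{N}$ then falls out of the normalization $f(\bar\Omega,\cdot)=\Omega'$ built into the definition of a factor system, which correctly exhibits condition (1) as a consequence of (2) and (3) rather than an independent hypothesis. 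What your route buys is a self-contained proof that does not lean on the cited proposition, at the cost of a slightly longer (but entirely routine) verification; your observation that $f(P,P)=\Omega'$ need not be assumed separately, since it follows from \eqref{factosystem} with $Q=P$, matches the paper's later remark in \eqref{eq:factoromega}.
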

	
	\begin{proof}
		By Proposition 3.2 in \cite{Schreierloop}, the map $T$ is trivial, $\mathcal{L_N}$ is a central subgroup of $\mathcal{L_S}$ and $f$ is symmetric.	In the resulting loop, the totally symmetric property
		\begin{equation*}
			(P,x)\circ\big((P,x)\circ(Q,y)\big)=(Q,y),
		\end{equation*}
		is equivalent to
		\begin{equation}\label{eq:totsym}
			\big(Q,y+f\left(P,Q\right)+f\left(P,PQ\right)\big)=\left(Q,y\right).
		\end{equation} 
		\Cref{eq:totsym} holds if and only if $f(P,Q)=f(P,PQ)$ for every $P,Q\in \mathcal{Q}$. Of course, by the symmetry of the factor system, $f(P,Q)$ coincides also with $f(Q,PQ)$. 
	\end{proof}

	Schreier extensions are used, for example, in the study of \emph{oriented} Steiner triple systems \cite{orientedsts} and nilpotent Steiner loops of class $2$  \cite{nilpotentloopsclass2}.
	
	From now on, by Schreier extensions of Steiner loops, denoted by a short exact sequence
	\begin{equation}
		\Omega'\longrightarrow \mathcal{L_N} \longrightarrow \mathcal{L_S} \longrightarrow \mathcal{L_Q} \longrightarrow \bar\Omega,
	\end{equation}
	we will mean Schreier extensions satisfying the conditions of \Cref{lemmatriples}. In this situation, we also say that the Steiner triple system $\mathcal{S}$ is a Schreier extension of $\mathcal{N}$ by $\mathcal{Q}$.
	
	\Cref{factosystem} can be reformulated by saying that
	\begin{equation}\label{eq:factoromega}
		f(P,P)=f(P,\bar\Omega)=\Omega' \text{ for every } P\in\mathcal{L_Q},
	\end{equation}
	and that $f$ is constant on the triples of $\mathcal{Q}$, that is,
	\begin{equation}\label{eq:factortriple}
		f(P,Q)=f(P,R)=f(Q,R)  \text{ whenever } \{P,Q,R\} \text{ is a triple of } \mathcal{Q}.
	\end{equation}
	
	We want to stress the fact that since $\mathcal{L_N}$ is in the center of $\mathcal{L_S}$, the elements of $\mathcal{N}$ are Veblen points of $\mathcal{S}$, that is, a Schreier extension of Steiner loops is an extension of a projective geometry $\mathcal{N}$ obtained by a symmetric function which is constant of the triples of $\mathcal{Q}$.
	
	Now we give an example of a Schreier extension resulting in the Steiner triple system of order $15$ with precisely one Veblen point. 
	
	\begin{example} 
		Consider the $\mathrm{STS}(7)$ $\mathcal{Q}$ with points and triples as in  \Cref{fig:fanoquotient}.
		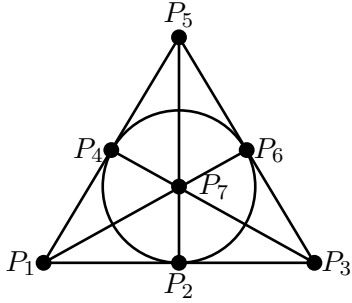
\begin{figure}[H]
			\begin{center}
				\begin{tikzpicture}[scale=0.3]
					\draw[line width=1] (-6,0) -- (6,0) -- (0,10) -- cycle;
					\draw[line width=1] (-6,0) -- (3,5);
					\draw[line width=1] (6,0) -- (-3,5);
					\draw[line width=1] (0,10) -- (0,0);
					
					\fill (-6,0) circle (10pt);
					\fill (6,0) circle (10pt);
					\fill (0,10) circle (10pt);
					\fill (0,0) circle (10pt);
					\fill (3,5) circle (10pt);
					\fill (0,3.38) circle (10pt);
					\fill (-3,5) circle (10pt);
					
					\draw[line width=1] (0,3.38) circle [radius=3.38];
					
					\node at (0,11) {$P_5$};
					\node at (0,-1) {$P_2$};
					\node at (1.55,3.38) {$P_7$};
					\node at (-7,0) {$P_1$};
					\node at (7,0) {$P_3$};
					\node at (-4,5) {$P_4$};
					\node at (4,5) {$P_6$};
				\end{tikzpicture}
			\caption{$\mathrm{STS}(7)$ $\mathcal{Q}$}
			\label{fig:fanoquotient}
		\end{center}
		\end{figure}
		Let $\mathcal{L_S}$ be the Schreier extension of $\mathcal{L_N}=\{\Omega',1\}$ by $\mathcal{L_Q}$ given by the factor system $f$ with
		\begin{equation*}
			f(P_3,P_5)=f(P_3,P_6)=f(P_5,P_6)=1,
		\end{equation*}
		\begin{equation*}
			f(P_3,P_4)=f(P_3,P_7)=f(P_4,P_7)=1,
		\end{equation*}
		and $f(P,Q)=\Omega'$ elsewhere. The element $(\bar\Omega,1)$ is a Veblen point of $\mathcal{S}$. It is the only one because $\mathcal{L_S}$ is not associative. Indeed, for example
			\begin{align*}
				\Big((P_1,\Omega ')\circ(P_2,\Omega ')\Big)\circ(P_4,\Omega ')&=(P_7,1),\\
				(P_1,\Omega ')\circ\Big((P_2,\Omega ')\circ(P_4,\Omega ')\Big)&=(P_7,\Omega ').
		\end{align*}
		Hence, $\mathcal{S}$ is the $\mathrm{STS}(15)$ $\#2$.
	\end{example}

	The theory of Schreier extensions offers a constructive method to obtain Steiner triple systems containing Veblen points. On the other hand, if $\mathcal{L_N}$ is a central subgroup of $\mathcal{L_S}$, then $\mathcal{L_S}$ can be obtained as a Schreier extension of $\mathcal{L_N}$ by $\mathcal{L_Q}=\mathcal{L_S}/\mathcal{L_N}$ (cf. \cite{bruck1}, p. 334). This means that any Steiner triple system $\mathcal{S}$ containing Veblen points can be seen as a Schreier extension of the projective $\mathrm{STS}$ consisting of its Veblen points (or a proper subsystem of it).
	This gives a necessary and sufficient condition on an admissible positive integer $v$ for the existence of a Steiner triple system of order $v$ containing (at least) a certain number of Veblen points. A trivial case occurs when $v=2^t-1$ for $t>0$. In fact, projective geometries $\mathrm{PG}(t-1,2)$ are the only examples $\mathrm{STS}(v)$s with $v$ Veblen points.
	
	\begin{theorem}\label{number of Veblen points2}
		Let $v>1$ be an admissible integer. There exists an $\mathrm{STS}(v)$ with $2^t-1<v$ Veblen points, $t> 0$, if and only if $\frac{v+1}{2^t}\equiv 2,4 \pmod 6$. 
	\end{theorem}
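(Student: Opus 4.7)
The plan is to prove both implications by translating Veblen points into central elements of the Steiner loop $\mathcal{L_S}$ via \Cref{veblencenter}, and then applying the Schreier extension machinery of \Cref{lemmatriples}.

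For the necessity direction, I would begin with an $\mathrm{STS}(v)$ possessing at least $2^t-1$ Veblen points. By \Cref{veblencenter}, these are precisely the nontrivial central elements of $\mathcal{L_S}$, so the center has order at least $2^t$; since it is an elementary abelian $2$-group, I can select a subgroup $\mathcal{L_N}\leq Z(\mathcal{L_S})$ of order exactly $2^t$. Being central, $\mathcal{L_N}$ is automatically normal, and by \Cref{Correspondence}(iii) the quotient $\mathcal{L_S}/\mathcal{L_N}$ is a Steiner loop of order $\frac{v+1}{2^t}$, hence corresponds to an $\mathrm{STS}$ of order $w=\frac{v+1}{2^t}-1$. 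Admissibility of $w$ then forces $\frac{v+1}{2^t}\equiv 2,4\pmod 6$.

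For the sufficiency direction, assuming $\frac{v+1}{2^t}\equiv 2,4\pmod 6$, I would set $w=\frac{v+1}{2^t}-1$ (admissible by hypothesis) and pick any $\mathrm{STS}(w)$ $\mathcal{Q}$. Taking $\mathcal{N}=\mathrm{PG}(t-1,2)$, I would assemble $\mathcal{L_S}$ as the Schreier extension of $\mathcal{L_N}$ by $\mathcal{L_Q}$ with the trivial factor system $f\equiv\Omega'$; the three conditions of \Cref{lemmatriples} are then vacuously satisfied, so $\mathcal{L_S}$ is a Steiner loop of order $2^t(w+1)=v+1$, corresponding to some $\mathrm{STS}(v)$. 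Since $\mathcal{L_N}$ sits in the center of $\mathcal{L_S}$ by construction, its $2^t-1$ nontrivial elements are Veblen points by \Cref{veblencenter}. The condition $w\geq 1$ then forces $v\geq 2^{t+1}-1>2^t-1$, giving the strict inequality.

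I do not foresee a genuine obstacle: the key structural results (\Cref{veblencenter}, \Cref{Correspondence}, and \Cref{lemmatriples}) are already in place, and the argument on both sides reduces to bookkeeping of the divisibility $2^t\mid v+1$ together with the admissibility constraint on the quotient. The one point worth articulating carefully is that any subgroup of $Z(\mathcal{L_S})$ is automatically normal in the ambient loop, which is what allows us to carve off exactly $2^t$ central elements in the necessity direction without any further compatibility checks.
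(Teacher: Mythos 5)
Your proof is correct and follows essentially the same route as the paper: necessity by passing to the quotient by (a subgroup of) the center and invoking admissibility of the quotient order, sufficiency by a Schreier extension of an elementary abelian $2$-group of order $2^t$ by any $\mathrm{STS}\bigl(\tfrac{v+1}{2^t}-1\bigr)$. Your write-up is in fact a bit more careful than the paper's two-line argument, since you explicitly carve out a central subgroup of order exactly $2^t$ (covering the case where the center is larger) and exhibit a concrete factor system (the trivial one) to guarantee the extension exists.
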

	\begin{proof}
		One direction of the claim follows from the fact that, if the center of a Steiner loop has cardinality $2^t$, then $\frac{v+1}{2^t}$ must be the cardinality of the quotient projective Steiner loop. The other direction is true because we can always construct a Steiner loop $\mathcal{L_S}$ with nontrivial center by considering a Schreier extension of an elementary abelian 2-group $\mathcal{L_N}$ of cardinality $2^t$ by a Steiner loop $\mathcal{L_Q}$ of order $\frac{v+1}{2^t}$.
	\end{proof}
	
	\begin{corollary} 
		Let $v>1$ be an admissible integer such that $v\equiv 1,9 \pmod{12}$. No $\mathrm{STS}(v)$ can have Veblen points.  		
	\end{corollary}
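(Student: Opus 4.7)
The plan is to derive the corollary as an immediate consequence of \Cref{number of Veblen points2}: it suffices to rule out, for the stated residues of $v$, the existence of any integer $t\geq 1$ with $\frac{v+1}{2^t}\equiv 2,4\pmod 6$.

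The first step is purely arithmetic. If $v\equiv 1\pmod{12}$ then $v+1\equiv 2\pmod{12}$, so we may write $v+1=2(6k+1)$; if $v\equiv 9\pmod{12}$ then $v+1\equiv 10\pmod{12}$, so $v+1=2(6k+5)$. In both cases $v+1$ is twice an odd integer, hence the $2$-adic valuation of $v+1$ equals $1$. Consequently the only $t\geq 1$ for which $2^t$ divides $v+1$ is $t=1$, and then $\frac{v+1}{2}$ is odd. An odd integer is congruent to $1,3,5\pmod 6$, so in particular it cannot be congruent to $2$ or $4\pmod 6$.

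The second step is to invoke \Cref{number of Veblen points2}. If an $\mathrm{STS}(v)$ contained a Veblen point, then by \Cref{veblencenter} its Steiner loop would have a nontrivial center, of cardinality $2^t$ for some $t\geq 1$ (since the center is an elementary abelian $2$-group and its order must divide $v+1$). By the theorem this forces $\frac{v+1}{2^t}\equiv 2,4\pmod 6$, contradicting the computation above.

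There is essentially no obstacle here beyond checking the residues carefully; the whole content of the corollary is the observation that, under $v\equiv 1,9\pmod{12}$, the integer $v+1$ has only one factor of $2$, and the resulting odd quotient can never be an admissible Steiner order plus one.
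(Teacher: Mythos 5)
Your proof is correct and follows essentially the same route as the paper: both arguments reduce the corollary to \Cref{number of Veblen points2} together with an elementary residue computation showing that for $v\equiv 1,9\pmod{12}$ the quotient $\frac{v+1}{2^t}$ can never be $\equiv 2,4\pmod 6$. The only (minor) difference is that you explicitly rule out $t\geq 2$ via the $2$-adic valuation of $v+1$, whereas the paper works directly with the $t=1$ condition and observes it is equivalent to $v\equiv 3,7\pmod{12}$; this is a presentational nuance, not a different method.
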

	\begin{proof} Let $v>1$ be an admissible integer. Therefore, $v\equiv 1,3,7,9 \pmod{12}$. By \Cref{number of Veblen points2}, the existence of an $\mathrm{STS}(v)$ with at least one Veblen point is guaranteed if $\frac{v+1}{2}\equiv 2,4 \pmod 6$. This condition is equivalent to $v\equiv 3,7 \pmod{12}$. Hence, any $\mathrm{STS}(v)$ with $v\equiv 1,9 \pmod{12}$ cannot have Veblen points. 
	\end{proof}

		In particular, the first 10 admissible integers $v$ for which any $\mathrm{STS}(v)$ cannot have Veblen points are
		\begin{equation*}
			9,\ 13,\ 21,\ 25,\ 33,\ 37,\ 45,\ 49,\ 57,\ 61.
		\end{equation*}
		Regarding the orders not mentioned, \Cref{number of Veblen points2} shows, for example, that any $\mathrm{STS}(19)$ and any $\mathrm{STS}(27)$ can have at most one Veblen point. Furthermore, an $\mathrm{STS}(31)$ can have $1$, $3$, $7$, $15$, or, as stated in \Cref{thmVeblenYoung}, $31$ Veblen points if it is projective. However, we will see that it cannot have precisely $7$ nor $15$ Veblen points. In fact, there is a crucial threshold: as a consequence of \Cref{theoindex4}, exceeding a certain number of Veblen points forces an $\mathrm{STS}(v)$ to be projective, hence to have $v$ Veblen points. 
	
	\begin{theorem} \label{theoindex4}
		If a Schreier extension of Steiner loops
		\begin{equation}
			\Omega'\longrightarrow \mathcal{L_N} \longrightarrow \mathcal{L_S} \longrightarrow \mathcal{L_Q} \longrightarrow \bar\Omega,
		\end{equation}
		has index at most $4$, then the resulting Steiner triple system $\mathcal{S}$ is projective.
	\end{theorem}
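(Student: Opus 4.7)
The plan is to reduce the claim to showing that $\mathcal{L_S}$ is \emph{associative}. Since an associative Steiner loop is an elementary abelian $2$-group, and a Steiner triple system whose Steiner loop is a group is precisely a projective one (\cite{dipaolagroup}), this suffices.

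First I would note that the index of the extension is $|\mathcal{L_Q}|$, and that the Steiner loops of order at most $4$ are exhausted by the trivial loop $\{\bar\Omega\}$, the cyclic group $\mathbb{Z}_2$ (corresponding to $\mathrm{STS}(1)$), and the Klein four-group $\mathbb{Z}_2\times\mathbb{Z}_2$ (corresponding to $\mathrm{STS}(3)$), all of which are elementary abelian $2$-groups. In particular, $\mathcal{L_Q}$ is itself such a group, and by \Cref{lemmatriples} so is $\mathcal{L_N}$, embedded in the center of $\mathcal{L_S}$. Expanding the associativity relation for the operation
\begin{equation*}
(P,x)\circ (Q,y) = \bigl(PQ,\; x+y+f(P,Q)\bigr)
\end{equation*}
reduces associativity, exactly as in the classical theory of group extensions, to the $2$-cocycle identity
\begin{equation*}
f(P,Q) + f(PQ,R) = f(Q,R) + f(P,QR) \qquad \text{for all } P,Q,R \in \mathcal{L_Q}.
\end{equation*}

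The remaining work is to verify this identity for every triple $(P,Q,R)$, using the normalization \eqref{eq:factoromega}, the invariance on triples \eqref{eq:factortriple}, and the exponent-$2$ property of $\mathcal{L_N}$. The cases split naturally into: those where some argument equals $\bar\Omega$ (immediate from \eqref{eq:factoromega}); those where two arguments coincide, say $P=Q$, in which case the left-hand side becomes $f(P,P)+f(\bar\Omega,R)=\Omega'$ while \eqref{eq:factortriple} forces the right-hand side to reduce to $2f(P,R)=\Omega'$; and finally the case where $P,Q,R$ are three distinct nontrivial elements. This last case is precisely where the index hypothesis bites: it forces $\{P,Q,R\}$ to be the unique nontrivial triple of $\mathcal{L_Q}$, so $PQ=R$ and $QR=P$, and both sides collapse by \eqref{eq:factortriple} to the common value $f(P,Q)=f(Q,R)$.

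I do not expect a genuine obstacle; the only care needed is in the bookkeeping of the pairwise-coincident cases, where one must correctly apply \eqref{eq:factortriple} to identify $f(P,PR)$ with $f(P,R)$ before invoking exponent $2$. It is worth remarking that the argument breaks down as soon as $|\mathcal{L_Q}| \ge 8$: three distinct nontrivial elements of $\mathcal{L_Q}$ need no longer form a triple, so \eqref{eq:factortriple} fails to enforce the cocycle identity, and genuinely non-projective Schreier extensions become possible — in agreement with \Cref{number of Veblen points2} and the examples that follow.
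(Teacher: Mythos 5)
Your proposal is correct and follows essentially the same route as the paper: both reduce the claim to associativity of $\mathcal{L_S}$, rewrite associativity of the Schreier multiplication as the cocycle identity $f(P,Q)+f(PQ,R)=f(Q,R)+f(P,QR)$, and verify it by the same case split (an argument equal to $\bar\Omega$, two coincident arguments, or three distinct nontrivial elements forced by the index hypothesis to form the unique triple of $\mathcal{Q}$), using \eqref{eq:factoromega} and \eqref{eq:factortriple}. No further comment is needed.
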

	
	\begin{proof}
		Since the factor loop $\mathcal{L_Q}$ is of order less than or equal to $4$, it can be either the elementary abelian $2$-group of order $2$ or $4$.
		We want to prove that the associative property
		\begin{equation}\label{eq:associativefactorsystem}
			(P, x) \circ ((Q, y) \circ (R,z))=((P, x) \circ (Q, y)) \circ (R,z)
		\end{equation}
		holds for every $P,Q,R\in\mathcal{L_Q}$ and $x,y,z\in\mathcal{L_N}$. Let $f$ be the factor system defining the Schreier extension. 
		If $\mathcal{L_Q}$ has order $2$, then $f$ is the null function and $\mathcal{L_S}$ is a group. Let now $\mathcal{L_Q}$ be the elementary abelian $2$-group of order $4$.	
		\Cref{eq:associativefactorsystem} is equivalent to the property that for every $P,Q,R\in\mathcal{L_Q}$,
		\begin{equation} \label{assocfacsyst} 
			f(P,Q R) + f(Q, R)= f(PQ, R) + f(P,Q).
		\end{equation}
		\begin{itemize}
			\item If the three points form the only triple in the underlying $\mathrm{STS}(3)$ $\mathcal{Q}$, then by \Cref{eq:factortriple} we get \Cref{assocfacsyst}. 
			\item If one out of the three points is the identity element, say $P=\bar \Omega$ without loss of generality, \Cref{assocfacsyst} reduces to
			\begin{equation}
				f(\bar \Omega,Q R)+ f(Q, R)=f(Q, R) +f(\bar \Omega,Q),
			\end{equation}
			which is true since $f(\bar \Omega,Q R)=\Omega'=f(\bar\Omega,Q)$ by \Cref{eq:factoromega}. 
			\item If two out of the three points coincide, say $P=Q$ without loss of generality, then \Cref{assocfacsyst} reduces to 
			\begin{equation}
				f(P,P R)+ f(P,R)= f(\bar \Omega, R)+ f(P,P),
			\end{equation}
			which holds since both sides are equal to $\Omega'$.
		\end{itemize}
		This proves the assertion. 
	\end{proof}
	
	As a direct consequence, \Cref{theoindex4} allows us to strengthen \Cref{thmVeblenYoung}, setting a threshold for the maximum number of Veblen points in a non-projective Steiner triple system.
	
	\begin{corollary}\label{maxnumbveblenpoints}
		Let $\mathcal{S}$ be a Steiner triple system of order $v\geq 7$. $\mathcal{S}$ is projective if and only if it has more than $\frac{v-7}{8}$ Veblen points.
	\end{corollary}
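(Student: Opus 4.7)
The forward implication is immediate: if $\mathcal{S}$ is projective, then by \Cref{thmVeblenYoung} all $v$ points are Veblen, and clearly $v > \frac{v-7}{8}$ for any $v\geq 7$. The substantive content is the converse, which I plan to prove by contrapositive, showing that a non-projective $\mathrm{STS}(v)$ has at most $\frac{v-7}{8}$ Veblen points.

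The key structural input is the corollary to \Cref{veblencenter}: the set of Veblen points, together with $\Omega$, forms the center $\mathcal{Z}=\mathcal{L_N}$ of $\mathcal{L_S}$, an elementary abelian $2$-group of some order $2^{n+1}$, corresponding to a projective subsystem $\mathcal{N}\cong \mathrm{PG}(n,2)$ with $2^{n+1}-1$ points. (If $\mathcal{S}$ has no Veblen points at all, then $0\leq \frac{v-7}{8}$ trivially for $v\geq 7$, and we are done.) Since $\mathcal{L_N}$ is central, the discussion after \Cref{lemmatriples} shows that $\mathcal{L_S}$ arises as a Schreier extension
\begin{equation*}
\Omega' \longrightarrow \mathcal{L_N} \longrightarrow \mathcal{L_S} \longrightarrow \mathcal{L_Q} \longrightarrow \bar\Omega,
\end{equation*}
where $\mathcal{L_Q}$ is a Steiner loop of order $k=(v+1)/2^{n+1}$.

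Now I invoke \Cref{theoindex4}: if $k\leq 4$, then $\mathcal{S}$ is projective. Under the assumption that $\mathcal{S}$ is \emph{not} projective, we therefore have $k>4$. The second key observation is that $k$ must be the order of a Steiner loop, i.e.\ $k\in\{1,2,4,8,10,14,16,\dots\}$, so the condition $k>4$ forces $k\geq 8$. Consequently $v+1\geq 2^{n+4}$, which rearranges to
\begin{equation*}
2^{n+1}-1 \leq \frac{v+1}{8} - 1 = \frac{v-7}{8}.
\end{equation*}
Thus the number $2^{n+1}-1$ of Veblen points is at most $\frac{v-7}{8}$, completing the contrapositive.

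The only non-routine step is recognizing that admissibility of the quotient's order (coming from the fact that $\mathcal{L_Q}$ is a Steiner loop, hence of order $\equiv 2,4\pmod 6$, or $1$) rules out $k\in\{5,6,7\}$ and hence promotes the bound $k>4$ from \Cref{theoindex4} to the sharper $k\geq 8$ needed to hit the threshold $\frac{v-7}{8}$. Once this arithmetic observation is in place, the rest is assembly of previously proved results.
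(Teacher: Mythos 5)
Your proposal is correct and follows essentially the same route as the paper: both reduce the statement to \Cref{theoindex4} via the identification of Veblen points with the nontrivial center, using the fact that the index of the center, being the order of a Steiner loop, cannot lie strictly between $4$ and $8$. You simply run the argument contrapositively and make explicit the admissibility step that the paper leaves implicit in the phrase ``it has index at most $4$.''
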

	
	\begin{proof}
		If an $\mathrm{STS}(v)$ $\mathcal{S}$ has more than $\frac{v-7}{8}$ Veblen points, then the center of $\mathcal{L_S}$ has order more than $\frac{v+1}{8}$. Thus, it has index at most $4$ and, by \Cref{theoindex4}, $\mathcal{S}$ is projective.
	\end{proof}
	
	 \Cref{maxnumbveblenpoints} deals specifically with Steiner triple systems of order $v\geq 7$. Note, however, that Steiner triple systems of order $1$ and $3$, which are the only cases left out of the result, are inherently projective.
	Another similar consequence of \Cref{theoindex4} is given by the following result.
	
	\begin{corollary}\label{numberveblenpg}
		If a Steiner triple system of order $v$, with $2^{n}-1\leq v< 2^{n+1}-1$ and $n>1$, contains at least $2^{n-3}$ Veblen points, then it is isomorphic to $\mathrm{PG}(n-1,2)$.
	\end{corollary}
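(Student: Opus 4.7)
The plan is to apply \Cref{theoindex4} after showing that, under the given hypothesis, $\mathcal{L_S}$ presents itself as a Schreier extension of its center with index at most $4$. By the corollary to \Cref{veblencenter}, the Veblen points of $\mathcal{S}$ constitute the nontrivial part of the center $\mathcal{Z}=\mathcal{Z}(\mathcal{L_S})$ and form a normal subsystem isomorphic to some $\mathrm{PG}(t-1,2)$; hence $|\mathcal{Z}|=2^t$, and by the remark after \Cref{lemmatriples} the loop $\mathcal{L_S}$ is a Schreier extension of $\mathcal{Z}$ by $\mathcal{L_Q}=\mathcal{L_S}/\mathcal{Z}$ in the sense of \Cref{lemmatriples}.

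First I would bound $t$ from below using the hypothesis on the number of Veblen points. Since $2^t-1\geq 2^{n-3}$, i.e.\ $2^t\geq 2^{n-3}+1$, and since $2^t$ is a power of two, this forces $t\geq n-2$, so $|\mathcal{Z}|\geq 2^{n-2}$. Combining this with $v+1\leq 2^{n+1}-1$, the index of the extension satisfies
\[
|\mathcal{L_Q}|=\frac{v+1}{2^t}\leq \frac{2^{n+1}-1}{2^{n-2}}=8-2^{2-n}<8.
\]

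Next, since $\mathcal{L_Q}$ is itself a Steiner loop, its order must belong to $\{1,2,4,8,10,14,16,\ldots\}$, namely either $1$ or $u+1$ with $u$ admissible. Combined with the strict inequality $|\mathcal{L_Q}|<8$, this forces $|\mathcal{L_Q}|\in\{1,2,4\}$. \Cref{theoindex4} then implies that $\mathcal{S}$ is projective; the case $|\mathcal{L_Q}|=1$ is also immediate, because then $\mathcal{S}$ coincides with its set of Veblen points. Since the interval $[2^n-1,2^{n+1}-1)$ contains exactly one number of the form $2^k-1$, namely $v=2^n-1$, we conclude $\mathcal{S}\cong\mathrm{PG}(n-1,2)$. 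The only step requiring care is the derivation of $t\geq n-2$ from the assumption of having at least $2^{n-3}$ Veblen points, which relies crucially on $|\mathcal{Z}|$ being a power of two, together with the arithmetic check that $8-2^{2-n}<8$ cuts off all admissible Steiner loop orders above $4$.
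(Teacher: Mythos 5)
Your proof is correct and follows essentially the same route as the paper: both arguments use the fact that the Veblen points form a projective subsystem (equivalently, that the center is an elementary abelian $2$-group) to upgrade the bound $2^{n-3}$ to $|\mathcal{Z}|\geq 2^{n-2}$, deduce that the quotient has order less than $8$ and hence at most $4$, and then invoke \Cref{theoindex4}. Your version merely spells out two steps the paper leaves implicit, namely that a Steiner loop of order less than $8$ must have order in $\{1,2,4\}$ and that $v=2^n-1$ is the only projective order in the given range.
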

	\begin{proof}
		If we assume that $\mathcal S$ has at least $2^{n-3}$ Veblen points, then, since they form a projective subsystem, the number of Veblen points is at least $2^{n-2}-1$. Thus the center $Z(\mathcal{L_S})$ of $\mathcal{L_S}$ has at least $2^{n-2}$ elements. If $v< 2^{n+1}-1$, then we have $|\mathcal{L_S}/Z(\mathcal{L_S})|<\frac{2^{n+1}}{2^{n-2}}=8$. So, $\mathcal{S}$ is projective and by cardinality reasons it is isomorphic to $\mathrm{PG}(n-1,2)$.
	\end{proof}

	\begin{remark} In view of the last two results, we can easily see the fact that the only $\mathrm{STS}(15)$ with more than one Veblen point is $\mathrm{PG}(3,2)$, as we showed before. Actually, now we can say even more: the only $\mathrm{STS}(v)$s with $v<31$ having more than one Veblen point are $\mathrm{PG}(1,2)$, $\mathrm{PG}(2,2)$ and $\mathrm{PG}(3,2)$. 
	\end{remark}

	\section{Equivalent and isomorphic extensions}\label{cohomology}
	
	In this section we study the concepts of equivalent and isomorphic Schreier extensions of Steiner loops. To stay constructive and not be too theoretical, fixed $\mathcal{L_N}$ and $\mathcal{L_Q}$, we identify each Schreier extension with the corresponding factor system
	\begin{equation*}
		f\colon \mathcal{L_Q}\times\mathcal{L_Q}\longrightarrow\mathcal{L_N}.
	\end{equation*}
	We denote the set of all factor systems with $	\mathrm{Ext_S}(\mathcal{L_N},\mathcal{L_Q})$. We recall that the output values of factor systems are in $\mathcal{L_N}$, which is associative. Therefore, $\mathrm{Ext_S}(\mathcal{L_N},\mathcal{L_Q})$ is a group under the addition of functions. After  \Cref{eq:factortriple}, a factor system is completely determined by the values it takes on the triples of the quotient system $\mathcal{Q}$, thus it is easy to see that the total number of possible Schreier extensions of $\mathcal{L_N}$ by $\mathcal{L_Q}$ is 
	\begin{equation}
		|\mathrm{Ext_S}(\mathcal{L_N},\mathcal{L_Q})|=2^{tb},
	\end{equation}
	where $b$ is the number of triples of $\mathcal{Q}$ and $2^{t}$ is the cardinality of the elementary abelian $2$-group  $\mathcal{L_N}$.  A relevant subgroup of $\mathrm{Ext_S}(\mathcal{L_N},\mathcal{L_Q})$ is formed by the so-called \emph{coboundaries}, defined by the cohomology operator $\delta^1$: if $\varphi$ is a map $\mathcal{L_Q}\longrightarrow \mathcal{L_N}$ sending $\Omega'\longmapsto \bar \Omega$, then $\delta^1 \varphi$ is the function $\mathcal{L_Q}\times \mathcal{L_Q}\longrightarrow\mathcal{L_N}$ defined by
	\begin{equation}\label{eq:delta1}
		(\delta^1\varphi)(P,Q):=\varphi(PQ)-(\varphi(P)+\varphi(Q)).
	\end{equation}
	Note that in \Cref{eq:delta1} the parentheses and the minus sign are superfluous, since the Steiner loop $\mathcal{L_N}$ is associative and of exponent $2$; however, for the sake of generality, we decided to present a more general definition in a non-associative context.
	By the definition of the operator $\delta^1$, every coboundary $\delta^1\varphi$ is in fact a factor system. 
Moreover, $\delta^1(\varphi+\psi)=\delta^1\varphi+\delta^1\psi$ holds, hence the set 
	\begin{equation}
		\mathrm{B}^2(\mathcal{L_Q},\mathcal{L_N}):=\left\{\delta^1\varphi \mid \varphi\colon \mathcal{L_Q}\to \mathcal{L_N}, \  \varphi(\Omega')=\bar\Omega  \right\}
	\end{equation}
	is a subgroup of $\mathrm{Ext_S}(\mathcal{L_N},\mathcal{L_Q})$. Since $\delta^1$ is additive, two coboundaries $\delta^1\varphi$ and $\delta^1\psi$ coincide if and only if they differ by a function $g$ such that $\delta^1g$ is zero, that is, a homomorphism $\mathcal{L_Q}\longrightarrow\mathcal{L_N}$. Therefore, the size of this subgroup is 	
	\begin{equation*}
		|\mathrm{B}^2(\mathcal{L_Q},\mathcal{L_N})|=\frac{|\{ \varphi: \mathcal{L_Q} \to \mathcal{L_N} \mid \varphi(\bar \Omega)=\Omega' \}|}{|\mathrm{Hom}(\mathcal{L_Q},\mathcal{L_N})|}=\frac{2^{tw}}{|\mathrm{Hom}(\mathcal{L_Q},\mathcal{L_N})|},
	\end{equation*}
	where $2^t=|\mathcal{L_N}|$ and $w=|\mathcal{Q}|$.
	This subgroup will prove to be useful for classifying factor systems up to \emph{equivalence}. 
	
	\begin{definition}\label{def:equivalent}
		Two Schreier extensions
		\begin{equation*}
			\Omega' \longrightarrow \mathcal{L_N}  {\longrightarrow} 
			\mathcal{L}_{\mathcal{S}_1} {\longrightarrow} \mathcal{L_Q}\longrightarrow\bar\Omega,
		\end{equation*}
		\begin{equation*}
			\Omega' \longrightarrow \mathcal{L_N}  {\longrightarrow} 
			\mathcal{L}_{\mathcal{S}_2} {\longrightarrow} \mathcal{L_Q}\longrightarrow\bar\Omega,
		\end{equation*}
		are said to be \emph{equivalent} if there exists an isomorphism $\mathcal{L}_{\mathcal{S}_1}\longrightarrow\mathcal{L}_{\mathcal{S}_2}$ which induces the identity homomorphism on both $\mathcal{L_N}$ and  $\mathcal{L_Q}$. In this case, the corresponding factor systems $f_1$ and $f_2$ are also called equivalent and we write $f_1\sim f_2$.
	\end{definition}
	
	Saying that an isomorphism $\Phi\colon \mathcal{L}_{\mathcal{S}_1} \to \mathcal{L}_{\mathcal{S}_2}$ induces the identity homomorphism on both $\mathcal{L_N}$ and  $\mathcal{L_Q}$ means that, for every $x\in\mathcal{L_N}$, $P\in\mathcal{L_Q}$, 
	\begin{equation*}
		\Phi(\bar \Omega, x)=(\bar \Omega, x) \quad \text{and} \quad 	\Phi(P, x)=(P, x'),
	\end{equation*}
	for a suitable $x'\in\mathcal{L_N}$. The following result gives a characterization of equivalent factor systems of Schreier extensions of Steiner loops.
	
	\begin{lemma}\label{lemmaequivalence}	
		Two factor systems $f_1,f_2\in \mathrm{Ext_S}(\mathcal{L_N},\mathcal{L_Q})$ are equivalent if and only if they differ by a suitable coboundary  $\delta^1 \varphi$. Moreover, the isomorphism between the corresponding loops realizing the equivalence has the following form:
		\begin{equation}
			(P,x)\longmapsto \left(P, x + \varphi(P)\right).
		\end{equation}
	\end{lemma}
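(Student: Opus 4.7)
The plan is to adapt the standard cohomological argument for abelian group extensions, which is possible here because \Cref{lemmatriples} guarantees that $\mathcal{L_N}$ sits centrally in $\mathcal{L_S}$ and that the action $T$ is trivial, so there are no twisting issues. Both directions will proceed by direct computation using the multiplication rule $(P,x)\circ(Q,y) = (PQ,\, x+y+f(P,Q))$.

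For the sufficiency, I assume $f_2 = f_1 + \delta^1\varphi$ for some $\varphi\colon \mathcal{L_Q}\to\mathcal{L_N}$ with $\varphi(\bar\Omega)=\Omega'$, and set $\Phi(P,x):=(P,\, x+\varphi(P))$. The map $\Phi$ is visibly a bijection; I would check that it is a homomorphism by expanding both sides of $\Phi\bigl((P,x)\circ_{f_1}(Q,y)\bigr) = \Phi(P,x)\circ_{f_2}\Phi(Q,y)$. The left side equals $(PQ,\, x+y+f_1(P,Q)+\varphi(PQ))$ and the right side equals $(PQ,\, x+y+\varphi(P)+\varphi(Q)+f_2(P,Q))$; these agree exactly because $f_2(P,Q) - f_1(P,Q) = \varphi(PQ) - \varphi(P) - \varphi(Q)$. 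The normalization $\varphi(\bar\Omega)=\Omega'$ then forces $\Phi(\bar\Omega,x)=(\bar\Omega,x)$, and since $\Phi$ preserves every coset $\{P\}\times\mathcal{L_N}$, it induces the identity on $\mathcal{L_N}$ and on $\mathcal{L_Q}$ as required by \Cref{def:equivalent}.

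For the necessity, I would start from an arbitrary equivalence $\Phi\colon \mathcal{L}_{\mathcal{S}_1}\to\mathcal{L}_{\mathcal{S}_2}$ and reconstruct $\varphi$. Since $\Phi$ induces the identity on $\mathcal{L_Q}$, it has the form $\Phi(P,x)=(P,\psi(P,x))$ for some function $\psi$. Using the decomposition $(P,x)=(P,\Omega')\circ_{f_1}(\bar\Omega,x)$ (valid thanks to \Cref{eq:factoromega}), the homomorphism property of $\Phi$, and the fact that $\Phi$ fixes the image $\{\bar\Omega\}\times\mathcal{L_N}$ of $\mathcal{L_N}$, one deduces $\psi(P,x) = x + \varphi(P)$ where $\varphi(P):=\psi(P,\Omega')$, establishing the stated explicit form of the isomorphism. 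Writing the homomorphism equation for a general pair and canceling the common first component together with the $x+y$ summand then yields $f_2(P,Q) = f_1(P,Q) + \varphi(PQ) - \varphi(P) - \varphi(Q)$, i.e.\ $f_2 - f_1 = \delta^1\varphi$; the normalization $\varphi(\bar\Omega)=\Omega'$ follows from $\Phi(\bar\Omega,\Omega')=(\bar\Omega,\Omega')$.

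I do not anticipate a real obstacle: the argument is entirely formal once one commits to the explicit ansatz $\Phi(P,x)=(P,x+\varphi(P))$. The only place requiring a moment's care is the reduction $\psi(P,x) = x+\varphi(P)$ in the necessity direction, where one uses both the centrality of $\mathcal{L_N}$ in $\mathcal{L_S}$ and the vanishing of the factor systems on pairs containing $\bar\Omega$. It is worth remarking that the symmetry of $f$ and its constancy on triples of $\mathcal{Q}$, which were the nontrivial constraints imposed by \Cref{lemmatriples}, are automatically preserved when passing from $f_1$ to $f_1+\delta^1\varphi$, so no separate verification that $f_2$ is a valid Steiner factor system is needed.
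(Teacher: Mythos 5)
Your proposal is correct and follows essentially the same route as the paper's proof: both directions rest on the ansatz $\Phi(P,x)=(P,x+\varphi(P))$, with the necessity direction extracting $\varphi$ from the decomposition $(P,x)=(P,\Omega')\circ(\bar\Omega,x)$ and comparing the two expansions of $\Phi\bigl((P,x)\circ(Q,y)\bigr)$. Your closing remark that $f_1+\delta^1\varphi$ automatically remains a valid Steiner factor system is a point the paper handles separately (just before the lemma, where it notes every coboundary is a factor system), but it is not a different method.
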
	
	
	\begin{proof}
		Let $\Phi\colon \mathcal{L}_{\mathcal{S}_1} \longrightarrow \mathcal{L}_{\mathcal{S}_2}$ be an isomorphism between the Steiner loops corresponding, respectively, to the factor systems $f_1$ and $f_2$. Suppose that $\Phi$ defines an equivalence of extensions. We have
		\begin{align*}
			\Phi(P,x)&=\Phi	\left( (P,\Omega') \circ (\bar\Omega,x)\right)\\
			&=\Phi(P,\Omega')\circ (\bar\Omega,x)\\
			&=(P,\varphi(P))\circ(\bar\Omega,x)\\
			&=(P,x+\varphi(P)),
		\end{align*}
		for a suitable function $\varphi:\mathcal{L_Q}\longrightarrow\mathcal{L_N}$. Since $\Phi(\bar\Omega,\Omega')=(\bar\Omega,\Omega')$, the map $\varphi$ sends $\bar\Omega\mapsto\Omega'$.
		
		Since $\Phi$ is an isomorphism, on the one hand we have
		\begin{align*}
			\Phi\left((P,x)\circ(Q,y)\right)&=\Phi\left(PQ,x+y+f_1(P,Q)\right)\\
			&=(PQ,x+y+f_1(P,Q)+\varphi(PQ)),
		\end{align*}
		on the other hand
		\begin{align*}
			\Phi\left((P,x)\circ(Q,y)\right)&=\Phi\left(P,x\right) \circ \Phi\left(Q,y\right)=(P,x+\varphi(P)) \circ (Q,y+\varphi(Q))\\
			&=(PQ,x+\varphi(P)+y+\varphi(Q)+f_2(P,Q)).
		\end{align*}
		Hence, $f_2=f_1+\delta^1\varphi$.
		
		Conversely, if $f_2=f_1+\delta^1\varphi$, then the function $\Phi(P,x):=(P,x+\varphi(P))$ defines an isomorphism $\Phi\colon \mathcal{L}_{\mathcal{S}_1} \longrightarrow \mathcal{L}_{\mathcal{S}_2}$  which induces the identity both on $\mathcal{L_N}$ and $\mathcal{L_Q}$. Hence, the factor systems $f_1$ and $f_2$ are equivalent.
	\end{proof}
	
	Using the characterization obtained with \Cref{lemmaequivalence}, we find that the number of non-equivalent extensions of $\mathcal{L_N}$ by $\mathcal{L_Q}$ is
	\begin{equation*}	
		\left|\frac{\mathrm{Ext_S}(\mathcal{L_N},\mathcal{L_Q})}{\mathrm{B}^2(\mathcal{L_Q},\mathcal{L_N})}\right|=\frac{2^{tb}}{|\mathrm{B}^2(\mathcal{L_Q},\mathcal{L_N})|}.
	\end{equation*}
	Since $|\mathrm{B}^2(\mathcal{L_Q},\mathcal{L_N})|=2^{wt}/|\mathrm{Hom}(\mathcal{L_Q},\mathcal{L_N})|$ and $b=\frac{w(w-1)}{6}$, this number is
	\begin{equation*}	
		\left|\frac{\mathrm{Ext_S}(\mathcal{L_N},\mathcal{L_Q})}{\mathrm{B}^2(\mathcal{L_Q},\mathcal{L_N})}\right|=2^{tw\left(\frac{w-7}{6}\right)}|\mathrm{Hom}(\mathcal{L_Q},\mathcal{L_N})|.
	\end{equation*}

	\begin{example}\label{nonequivalentsts15}
		An $\mathrm{STS}(15)$ with a Veblen point must be a Schreier extension of the $\mathrm{STS}(1)$ by the $\mathrm{STS}(7)$. Let $\mathcal{L_N}$ be of order $2$ and $\mathcal{Q}$ be the $\mathrm{STS}(7)$. The number of functions $\varphi: \mathcal{L_Q} \to \mathcal{L_N}$ with $\varphi(\bar \Omega)=\Omega'$ is $2^7$, and the order of  $\mathrm{Hom}(\mathcal{L}_\mathcal{Q},\mathcal{L}_\mathcal{N})$ is $2^3$, hence $|\mathrm{B}^2(\mathcal{L_Q},\mathcal{L_N})|=2^4$.
		Therefore, the number of non-equivalent Schreier extensions of $\mathcal{L_N}$ by $\mathcal{L_Q}$ is $\frac{2^7}{2^4}=8$. Actually, we know that among the resulting $8$ Steiner triple systems, we have only $2$ isomorphism classes, since the only $\mathrm{STS}(15)$s with Veblen points are $\#1$ and $\#2$. For this we need a further reduction. 
	\end{example}

	\begin{definition}\label{def:isomorphic}
		Two Schreier extensions
		\begin{equation*}
			\Omega' \longrightarrow \mathcal{L_N}  {\longrightarrow} 
			\mathcal{L}_{\mathcal{S}_1} {\longrightarrow} \mathcal{L_Q}\longrightarrow\bar\Omega,
		\end{equation*}
		\begin{equation*}
			\Omega' \longrightarrow \mathcal{L_N}  {\longrightarrow} 
			\mathcal{L}_{\mathcal{S}_2} {\longrightarrow} \mathcal{L_Q}\longrightarrow\bar\Omega,
		\end{equation*}
		are said to  be \emph{isomorphic} if there is an isomorphism $\mathcal{L}_{\mathcal{S}_1}\longrightarrow\mathcal{L}_{\mathcal{S}_2}$ leaving $\mathcal{L_N}$ invariant.  In this case, the corresponding factor systems $f_1$ and $f_2$ are also called isomorphic and we write $f_1\simeq f_2$.
	\end{definition}
	
	By definition, two isomorphic Schreier extensions again give two isomorphic Steiner loops, and consequently, two isomorphic Steiner triple systems. However, it is important to note that the converse is not always true. But, by definition, if $\mathcal{L_N}$ coincides with the whole center of the two isomorphic Steiner loops $\mathcal{L}_{\mathcal{S}_i}$ (i.e., there are no further Veblen points of $\mathcal{S}_i$), the two isomorphic loops are necessarily isomorphic extensions. In \Cref{rmk:furtherveblen} we settle whether a point $(P,x)$ not in $\mathcal{L_N}$ is a Veblen point.
	
	\begin{remark}\label{rmk:furtherveblen}
		Let the Steiner loop $\mathcal{L_S}$ be a Schreier extension of $\mathcal{L_N}$ by $\mathcal{L_Q}$ with factor system $f$. A point $(P,x)$ is a Veblen point of $\mathcal{S}$ not in $\mathcal{N}$ if and only if $P$ is a Veblen point of $\mathcal{Q}$ and
		\begin{equation} \label{eq:furtherveblen}
			f(P,Q)+f(PQ,R)=f(Q,R)+f(P,QR),
		\end{equation}
		for every $Q,R\in\mathcal{Q}$. The condition \eqref{eq:furtherveblen} reflects the centrality of the element $(P,x)$ in $\mathcal{L_S}$.
	\end{remark}

	Also, two equivalent extensions are isomorphic as well, but the converse is not always true. However, reducing up to equivalence is useful to characterize isomorphism of extensions easily, as we show in \Cref{propisomorphism}. For simplicity of notation, if $\beta$ is an automorphism of $\mathcal{L_Q}$ and $f\in  \mathrm{Ext_S}(\mathcal{L_N},\mathcal{L_Q})$, we denote by $f\beta$ the factor system defined by 
	\begin{equation*}
		(P,Q)\longmapsto f\left(\beta(P),\beta(Q)\right).
	\end{equation*}

	\begin{proposition}\label{propisomorphism} 
		Two factor systems $f_1,f_2\in \mathrm{Ext_S}(\mathcal{L_N},\mathcal{L_Q})$ are isomorphic if and only if  
		\begin{equation}
			\alpha f_1 \sim f_2\beta
		\end{equation}
		for suitable $\alpha\in\mathrm{Aut}(\mathcal{L_N})$ and $\beta\in\mathrm{Aut}(\mathcal{L_Q})$. Moreover, the isomorphism between the corresponding loops has, up to equivalence, the following form
		\begin{equation} \label{isom2form}
			(P,x)\longmapsto (\beta(P), \alpha (x)).
		\end{equation}	
		
	\end{proposition}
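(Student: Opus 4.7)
The plan is to mirror the proof of \Cref{lemmaequivalence}, while exploiting the extra freedom of an isomorphism that is only required to \emph{leave} $\mathcal{L_N}$ invariant rather than to fix it pointwise. This extra freedom produces, on restriction, an automorphism $\alpha\in\mathrm{Aut}(\mathcal{L_N})$, and, by descent to the quotient, an automorphism $\beta\in\mathrm{Aut}(\mathcal{L_Q})$. These are exactly the automorphisms appearing in the statement.

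For the $(\Rightarrow)$ direction, I would start with an isomorphism of extensions $\Phi\colon\mathcal{L}_{\mathcal{S}_1}\to\mathcal{L}_{\mathcal{S}_2}$ and read off $\alpha:=\Phi|_{\mathcal{L_N}}$ and the induced $\beta$. Writing $(P,x)=(P,\Omega')\circ(\bar\Omega,x)$ exactly as in \Cref{lemmaequivalence} should give
\begin{equation*}
\Phi(P,x) = (\beta(P),\varphi(P)+\alpha(x))
\end{equation*}
for a suitable $\varphi\colon\mathcal{L_Q}\to\mathcal{L_N}$ with $\varphi(\bar\Omega)=\Omega'$. Imposing $\Phi((P,x)\circ(Q,y))=\Phi(P,x)\circ\Phi(Q,y)$ and comparing the $\mathcal{L_N}$-components should yield the cocycle-type identity $\alpha f_1 - f_2\beta = \delta^1\varphi$ (signs being immaterial since $\mathcal{L_N}$ has exponent $2$), and hence $\alpha f_1\sim f_2\beta$ by \Cref{lemmaequivalence}.

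For the $(\Leftarrow)$ direction, given $\alpha,\beta,\varphi$ with $\alpha f_1 - f_2\beta = \delta^1\varphi$, I would define $\Phi(P,x):=(\beta(P),\varphi(P)+\alpha(x))$ and verify, by reversing the previous computation, that it is a bijective homomorphism of Steiner loops leaving $\mathcal{L_N}$ invariant. To obtain the canonical form \Cref{isom2form} claimed in the statement, I would factor $\Phi=\Psi\circ\Phi_0$, where $\Phi_0(P,x):=(\beta(P),\alpha(x))$ is a genuine isomorphism from $\mathcal{L}_{\mathcal{S}_1}$ onto the Schreier extension with factor system $\alpha f_1\beta^{-1}$ (a routine check on the first and second components), and $\Psi$ is the equivalence between that intermediate loop and $\mathcal{L}_{\mathcal{S}_2}$ furnished by \Cref{lemmaequivalence} applied to $\alpha f_1\beta^{-1}\sim f_2$.

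The main obstacle is purely bookkeeping: tracking the pre- and post-composition of $\alpha$ and $\beta$ with $f_1$ and $f_2$ carefully enough so that the resulting cohomological identity reads $\alpha f_1\sim f_2\beta$ rather than some asymmetric variant, and cleanly separating the genuinely \emph{new} data (the automorphisms $\alpha,\beta$) from the \emph{equivalence} data (the coboundary $\delta^1\varphi$) so that the canonical form \Cref{isom2form} emerges modulo the equivalence relation introduced in \Cref{def:equivalent}.
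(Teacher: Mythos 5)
Your proposal is correct and follows essentially the same route as the paper: read off $\alpha=\Phi|_{\mathcal{L_N}}$ and the induced $\beta$, write $\Phi(P,x)=(\beta(P),\alpha(x)+\varphi(P))$ via $(P,x)=(P,\Omega')\circ(\bar\Omega,x)$, and compare the $\mathcal{L_N}$-components of $\Phi((P,x)\circ(Q,y))$ to obtain $f_2\beta=\alpha f_1+\delta^1\varphi$. Your explicit factorization $\Phi=\Psi\circ\Phi_0$ through the intermediate extension with factor system $\alpha f_1\beta^{-1}$ is a slightly more detailed justification of the ``up to equivalence'' clause, which the paper dispatches by citing \Cref{lemmaequivalence} directly, but it is the same argument.
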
	
	
	\begin{proof} 
		Let $\Phi\colon \mathcal{L}_{\mathcal{S}_1} \longrightarrow \mathcal{L}_{\mathcal{S}_2}$ be an isomorphism between the Steiner loops corresponding, respectively, to the factor systems $f_1$ and $f_2$, and suppose that $\Phi$ defines an isomorphism of extensions.
		Since $\Phi(\mathcal{L_N})=\mathcal{L_N}$ and of course $\Phi (\mathcal{L_Q})=\mathcal{L_Q}$, we obtain that 
		\begin{equation}
			\Phi(\bar\Omega,x)=(\bar\Omega,\alpha(x)), \text{ for every } x\in\mathcal{L_N},
		\end{equation}
		where $\alpha$ is an automorphism of $\mathcal{L_N}$, and
		\begin{equation*}
			\Phi (P,\Omega')=(\beta(P),\varphi(P)), \text{ for every } P\in\mathcal{L_Q},
		\end{equation*}
		where $\beta$ is a suitable automorphism of $\mathcal{L_Q}$ and  $\varphi$ is a function $\mathcal{L_Q}\to\mathcal{L_N}$ mapping $\bar \Omega \mapsto \Omega'$.
		Thus,
		\begin{align*}
			\Phi(P,x)&=\Phi\left( (P,\Omega')\circ ( \bar \Omega, x)\right)\\
			&=(\beta(P), \varphi (P))\circ (\bar \Omega, \alpha (x))\\
			&=(\beta(P), \alpha (x)+\varphi (P)).
		\end{align*}
		If we multiply two elements in $\mathcal{L}_{\mathcal{S}_1}$, 
		\begin{equation}\label{eq:multiplicationisomorph}
			(P,x)\circ(Q,y)=(PQ,x+y+f_1(P,Q)),
		\end{equation}
		the isomorphism $\Phi$ maps the left-hand side of \Cref{eq:multiplicationisomorph} into 
		\begin{align*}
			&\left(\beta(P), \alpha (x)+\varphi (P)\right)\circ \left(\beta(Q), \alpha (y)+\varphi (Q)\right)\\
			=&\left(\beta(PQ), \alpha (x+y)+\varphi (P)+\varphi (Q) + f_2\left(\beta(P),\beta(Q)\right)\right)
		\end{align*}
		and the the right-hand side into 
		\begin{equation*}
			\big(\beta(PQ),\alpha(x+y)+\alpha f_1(P,Q)+\varphi(PQ)\big).
		\end{equation*}
		Therefore, for every $P,Q\in\mathcal{L_Q}$, the following holds
		\begin{equation*}
			f_2\left(\beta(P),\beta(Q)\right)=\alpha f_1(P,Q)+\varphi(PQ)+\varphi (P)+\varphi (Q).
		\end{equation*}
		Hence,
		\begin{equation*}
			f_2\beta=\alpha f_1+ \delta^1\varphi,
		\end{equation*}
		that is, $f_2\beta\sim \alpha f_1$. 
		By \Cref{lemmaequivalence}, up to equivalence $\Phi$ has the form in \Cref{isom2form}.
		
		Conversely, if $f_2\beta\sim \alpha f_1$, the map in \eqref{isom2form} defines an isomorphism of extensions. 
	\end{proof}

	Now we give an example of two isomorphic but not equivalent Schreier extensions.
	
	\begin{example}
		Let $\mathcal{L_N}=\{\Omega', 1\}$  be the unique loop of cardinality $2$ and $\mathcal{L_Q}$ be the Steiner loop corresponding to the $\mathrm{STS}(9)$. We can represent $\mathcal Q$ as the affine plane $\mathrm{AG}(3,2)$ with points and lines given by \Cref{fig:sts9Q}. 
		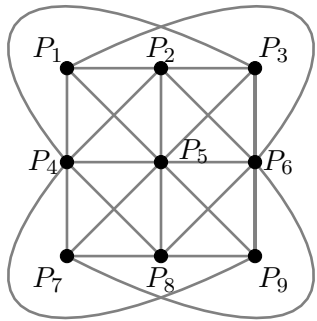
\begin{figure}[H]\centering
			\begin{tikzpicture}[line cap=round,line join=round,>=triangle 45,x=1cm,y=1cm,scale=1.25]
				\draw [line width=1pt,color=gray] (-1,-1)-- (1,1);
				\draw [line width=1pt,color=gray] (-1,-1)-- (1,-1);
				\draw [line width=1.5pt,color=gray] (1,-1)-- (1,1);
				\draw [line width=1pt,color=gray] (1,1)-- (-1,1);
				\draw [line width=1pt,color=gray] (-1,1)-- (-1,-1);
				\draw [line width=1pt,color=gray] (-1,0)-- (1,0);
				\draw [line width=1pt,color=gray] (0,1)-- (0,-1);
				\draw [line width=1pt,color=gray] (1,-1)-- (-1,1);
				\draw [line width=1pt,color=gray] (0,-1)-- (1,0);
				\draw [line width=1pt,color=gray] (1,0)-- (0,1);
				\draw [line width=1pt,color=gray] (0,1)-- (-1,0);
				\draw [line width=1pt,color=gray] (-1,0)-- (0,-1);
				\draw [shift={(-1.699214041911079,-0.8152542693528347)},line width=1pt,color=gray]  plot[domain=2.3117548637419008:4.2939481603980205,variable=\t]({-0.8196591288696354*3.96958988026553*cos(\t r)+0.5728515623271623*1.1900139979490725*sin(\t r)},{-0.5728515623271623*3.96958988026553*cos(\t r)+-0.8196591288696354*1.1900139979490725*sin(\t r)});
				\draw [shift={(1.699214041911079,-0.8152542693528345)},line width=1pt,color=gray]  plot[domain=2.3117548637419008:4.293948160398021,variable=\t]({0.8196591288696355*3.96958988026553*cos(\t r)+-0.5728515623271622*1.1900139979490725*sin(\t r)},{-0.5728515623271622*3.96958988026553*cos(\t r)+-0.8196591288696355*1.1900139979490725*sin(\t r)});
				\draw [shift={(-1.699214041911079,0.8152542693528347)},line width=1pt,color=gray]  plot[domain=2.3117548637419008:4.2939481603980205,variable=\t]({-0.8196591288696354*3.96958988026553*cos(\t r)+0.5728515623271623*1.1900139979490725*sin(\t r)},{0.5728515623271623*3.96958988026553*cos(\t r)+0.8196591288696354*1.1900139979490725*sin(\t r)});
				\draw [shift={(1.699214041911079,0.815254269352835)},line width=1pt,color=gray]  plot[domain=2.3117548637419008:4.2939481603980205,variable=\t]({0.8196591288696353*3.96958988026553*cos(\t r)+-0.5728515623271624*1.1900139979490725*sin(\t r)},{0.5728515623271624*3.96958988026553*cos(\t r)+0.8196591288696353*1.1900139979490725*sin(\t r)});
					\draw [fill=black] (-1,1) circle (2pt);
					\draw [fill=black] (0,1) circle (2pt);
					\draw [fill=black] (1,1) circle (2pt);
					\draw [fill=black] (-1,0) circle (2pt);
					\draw [fill=black] (0,0) circle (2pt);
					\draw [fill=black] (1,0) circle (2pt);
					\draw [fill=black] (-1,-1) circle (2pt);
					\draw [fill=black] (0,-1) circle (2pt);
					\draw [fill=black] (1,-1) circle (2pt);

					\node at (-1.20,1.2) {$P_1$};
					\node at (0,1.2) {$P_2$};
					\node at (1.20,1.2) {$P_3$};
					\node at (-1.25,0) {$P_4$};
					\node at (0.35,0.12) {$P_5$};
					\node at (1.25,0) {$P_6$};
					\node at (-1.20,-1.25) {$P_7$};
					\node at (0,-1.25) {$P_8$};
					\node at (1.20,-1.25) {$P_9$};
			\end{tikzpicture}\caption{$\mathrm{STS}(9)$ $\mathcal{Q}$}\label{fig:sts9Q}
		\end{figure}
		
		Consider the Schreier extension $\mathcal{L}_{\mathcal{S}_1}$ associated with the factor system $f_1$ such that
		\begin{equation*}
			f_1(P_3,P_6)=f_1(P_3,P_9)=f_1(P_6,P_9)=1
		\end{equation*}
		and $f_1$ is zero elsewhere.
		The automorphism $\beta $ of $\mathcal{L_Q}$ induced by the affine map $x \mapsto A x+b$ of $\mathrm{AG}(3,2)$, with 
		\begin{equation*}
			A=\left(\begin{array}{cc}
				1 & 1	\\
				0 & 1 \end{array} \right),
			\quad 
			b=\left(\begin{array}{c}
				-1 	\\
				0 \end{array} \right)
		\end{equation*}
		permutes the points of $\mathcal{Q}$ as
		\begin{equation*}
			\beta(P_i)=P_{\sigma(i)}, \quad \text{ with } \quad \sigma=(465)(789).
		\end{equation*}
		Consider the Steiner loop $\mathcal{L}_{\mathcal{S}_2}$ which is the Schreier extension associated with the factor system $f_2:=f _1\beta $, which is clearly defined by
		\begin{equation*}
			f_2(P_3,P_4)=f_2(P_3,P_8)=f_2(P_4,P_8)=1
		\end{equation*}
		and is zero elsewhere. By construction, $f_1$ and $f_2$ are isomorphic, but they are not equivalent. In fact, by \Cref{lemmaequivalence}, $f_1$ and $f_2$ are equivalent if and only if $f_1+f_2=\delta^1\varphi$, for a suitable function $\varphi$. If this were the case, then for each triple $\{P_i,P_j,P_k=P_iP_j\}$ of $\mathcal{Q}$ the following must hold
		\begin{equation}\label{eq:system}
			\varphi(P_i)+\varphi(P_j)+\varphi(P_k)=\delta^1(P_i,P_j)=(f_1+f_2)(P_i,P_j).
		\end{equation}
		Denoting $\varphi(P_i)$ with $X_i$, for every $i=1,\dots,9$, from \Cref{eq:system} we obtain the following linear system of twelve equations in the nine unknowns $X_i$ with scalars in the field $\mathrm{GF}(2)$.
		\begin{equation}\label{eq:linearsystem}
				{\footnotesize{\begin{pmatrix}
				1 & 1 & 1 & 0 & 0 & 0 & 0 & 0 & 0 \\
				1 & 0 & 0 & 1 & 0 & 0 & 1 & 0 & 0 \\
				1 & 0 & 0 & 0 & 1 & 0 & 0 & 0 & 1 \\
				1 & 0 & 0 & 0 & 0 & 1 & 0 & 1 & 0 \\
				0 & 1 & 0 & 1 & 0 & 0 & 0 & 0 & 1 \\
				0 & 1 & 0 & 0 & 1 & 0 & 0 & 1 & 0 \\
				0 & 1 & 0 & 0 & 0 & 1 & 1 & 0 & 0 \\
				0 & 0 & 1 & 1 & 0 & 0 & 0 & 1 & 0 \\
				0 & 0 & 1 & 0 & 1 & 0 & 1 & 0 & 0 \\
				0 & 0 & 1 & 0 & 0 & 1 & 0 & 0 & 1 \\
				0 & 0 & 0 & 1 & 1 & 1 & 0 & 0 & 0 \\
				0 & 0 & 0 & 0 & 0 & 0 & 1 & 1 & 1 \\
			\end{pmatrix}
			\begin{pmatrix}
				X_1  \\
				X_2 \\
				X_3 \\
				X_4 \\
				X_5 \\
				X_6 \\
				X_7 \\
				X_8 \\
				X_9 \\
			\end{pmatrix}=
			\begin{pmatrix}
				0  \\
				0 \\
				0  \\
				0 \\
				0 \\
				0  \\
				0  \\
				1  \\
				0  \\
				1  \\
				0  \\
				0  \\
			\end{pmatrix}}}
		\end{equation}
		Since this linear system can be proved to have no solution, such a function $\varphi$ cannot exist. 
	\end{example}

	By \Cref{propisomorphism}, non-equivalent but isomorphic factor systems $f_1$, $f_2$ are characterized by the relation $\alpha f_1 = f_2 \beta$, for suitable $\alpha\in\mathrm{Aut}(\mathcal{L_N})$ and $\beta\in\mathrm{Aut}(\mathcal{L_Q})$. This relation can be rewritten as
	\begin{equation}
		f_2 = \alpha f_1 \beta^{-1}.
	\end{equation}
	In this way, we can define a left action of the group $\mathrm{Aut}(\mathcal{L_N})\times\mathrm{Aut}(\mathcal{L_Q})$ on the set $\mathrm{Ext}_S(\mathcal{L_N},\mathcal{L_Q})/\mathrm{B}^2(\mathcal{L_Q},\mathcal{L_N})$ of non-equivalent extensions given by 
	\begin{equation*}
		(\alpha,\beta)(f)=\alpha f \beta^{-1},
	\end{equation*}
	whose orbits are the isomorphism classes of all the factor systems. 
	
	\begin{remark}
		The isomorphism classes of the factor systems in $\mathrm{Ext}_S(\mathcal{L_N},\mathcal{L_Q})$ are the orbits of the left action of the group $\mathrm{Aut}(\mathcal{L_N})\times\mathrm{Aut}(\mathcal{L_Q})$ on the set $\mathrm{Ext}_S(\mathcal{L_N},\mathcal{L_Q})/\mathrm{B}^2(\mathcal{L_Q},\mathcal{L_N})$ of non-equivalent extensions given by 
		\begin{equation*}
			(\alpha,\beta)(f)=\alpha f \beta^{-1}.
		\end{equation*}
		Indeed, by \Cref{propisomorphism}, non-equivalent but isomorphic factor systems $f_1$, $f_2$ are characterized by the relation $\alpha f_1 = f_2 \beta$, for suitable $\alpha\in\mathrm{Aut}(\mathcal{L_N})$ and $\beta\in\mathrm{Aut}(\mathcal{L_Q})$. 
	\end{remark}

	\section{Steiner operators}\label{extensions}
	
	In \Cref{section:schreierext} and \Cref{cohomology} we explored Schreier extensions of Steiner loops. In this section, we study a more general way of making extensions, that is, via a \emph{Steiner operator}. This concept allows us to construct, starting from an $\mathrm{STS}(u)$ and an $\mathrm{STS}(w)$, another Steiner triple system of order $(u+1)(w+1)-1$, with one as normal subsystem (which in this case is not central in general) and the other as the corresponding quotient. 
	
	We recall that a subloop of index two, that is, corresponding to a projective hyperplane, is always normal. However, by \Cref{maxnumbveblenpoints}, it is not central in non-projective Steiner triple systems. Also, a subloop of index four is generally not even normal, but again by \Cref{maxnumbveblenpoints}, if it is, it is not central in non-projective $\mathrm{STS}$s. However, both cases can be handled with Steiner operators. As we will see, in the case of a hyperplane, we only need to specify the diagonal blocks of a Steiner operator, thus, the problem of classifying $\mathrm{STS}(v)$s containing a projective hyperplane is reduced to classifying $\mathrm{STS}(\frac{v-1}{2})$s and symmetric Latin squares on $\frac{v-1}{2}$ letters with a fixed element in the main diagonal. In the case of index four, the diagonal blocks of the Steiner operator are not sufficient any longer to define the Steiner triple system, but we only need to fix one block other than the diagonal ones (see \Cref{thm:blocks}).

	\begin{definition}\label{SteinerOperator}
		Let $\mathcal{L_N}$ and $\mathcal{L_Q}$ be Steiner loops of order $n=u+1$ and $m=w+1$ with identity elements $\Omega'$ and $\bar{\Omega}$ respectively, and let $\mathrm{LS}(\mathcal{L_N})$ be the set of $n\times n$ Latin squares on $\mathcal{L_N}$. An operator $\Phi:\mathcal{L_Q}\times \mathcal{L_Q}\longrightarrow \mathrm{LS}(\mathcal{L_N})$, which maps the couple $(P,Q)$ into a Latin square
		$\Phi_{P,Q}: \mathcal{L_N}\times \mathcal{L_N}\longrightarrow \mathcal{L_N}$, is called a \emph{Steiner operator} if it fulfills the following conditions:
		\begin{enumerate}[(i)]
			\item the Latin square $\Phi_{\bar \Omega,\bar \Omega}$ is the (symmetric) multiplication table of $\mathcal{L_N}$;
			\item $\Phi_{Q,P}(y,x)=\Phi_{P,Q}(x,y)$, that is, $\Phi_{Q,P}$ is the transpose
			of $\Phi_{P,Q}$;
			\item $\Phi_{P,P}(x,x)=\Omega'$;
			\item$\Phi_{P,P  Q}(x, \Phi_{P,Q}(x,y))=y$;
		\end{enumerate}
		for all $(P,x),(Q,y)\in \mathcal{L_Q}\times \mathcal{L_N}$.
	\end{definition}
	
	\noindent Note that for $P=Q$ and $x=y$, conditions iii) and iv) lead to
	\begin{equation}\label{eq:rmkidentity}
		\Phi_{P,\bar \Omega}(x,\Omega')=x.
	\end{equation}
	
	\begin{theorem}\label{Extensions}
		Let $\mathcal{L_N}$ and $\mathcal{L_Q}$ be two Steiner loops of order $u+1$, $w+1$ and with identity elements $\Omega'$ and $\bar \Omega$, respectively. Let
		$\Phi:\mathcal{L_Q}\times \mathcal{L_Q}\longrightarrow \mathrm{LS}(\mathcal{L_N})$ be a Steiner operator.
		If we define on the set $\mathcal{L_Q}\times \mathcal{L_N}$ the following operation
		\begin{equation}
			(P, x) \circ (Q,y):=\big(PQ,\Phi_{P,Q}(x,y)\big),
		\end{equation}
		we obtain a Steiner loop $\mathcal{L_S}$  of order $v+1=(u+1)(w+1)$ with identity $\Omega=(\bar \Omega,\Omega')$. The subloop 
		$$\overline{\mathcal{L_N}}=\{(\bar \Omega, x)\mid x\in \mathcal{L_N}\}$$
		is a normal subloop of $\mathcal{L_S}$ isomorphic to $\mathcal{L_N}$, with corresponding quotient ${\mathcal L_S}/\overline{\mathcal{L_N}}$ isomorphic to $\mathcal{L_Q}$.
		Conversely, any Steiner loop with a proper normal subloop is isomorphic to the construction described above, for a suitable Steiner operator.
	\end{theorem}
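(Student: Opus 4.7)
The plan is to verify both directions through a direct translation between the four conditions defining a Steiner operator and the axioms for a (Steiner) loop, using the totally symmetric property and the Latin square property throughout.

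For the forward direction, I would show that $\mathcal{L_Q} \times \mathcal{L_N}$ with the operation $\circ$ is a Steiner loop. The element $(\bar\Omega, \Omega')$ is a two-sided identity, since $\bar\Omega \cdot P = P$ in $\mathcal{L_Q}$, while condition (ii) together with \Cref{eq:rmkidentity} give $\Phi_{\bar\Omega, P}(\Omega', x) = \Phi_{P, \bar\Omega}(x, \Omega') = x$. Commutativity of $\circ$ reduces to commutativity of $\mathcal{L_Q}$ in the first coordinate and to condition (ii) in the second. The totally symmetric identity $(P, x) \circ ((P, x) \circ (Q, y)) = (Q, y)$ splits as $P \cdot (PQ) = Q$ in $\mathcal{L_Q}$ together with $\Phi_{P, PQ}(x, \Phi_{P,Q}(x, y)) = y$ in the second coordinate, which is precisely condition (iv). Unique solvability of $(a, u) \circ (X, Y) = (b, v)$ follows from the quasigroup property of $\mathcal{L_Q}$ (giving $X$) and from each $\Phi_{P, Q}$ being a Latin square (giving $Y$). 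Finally, condition (i) says that $\circ$ restricted to $\overline{\mathcal{L_N}} = \{\bar\Omega\} \times \mathcal{L_N}$ coincides with the multiplication of $\mathcal{L_N}$, and the projection $(P, x) \mapsto P$ is an epimorphism $\mathcal{L_S} \to \mathcal{L_Q}$ with kernel $\overline{\mathcal{L_N}}$, which settles both the isomorphism $\overline{\mathcal{L_N}} \cong \mathcal{L_N}$ and its normality with quotient $\mathcal{L_Q}$.

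For the converse, given a Steiner loop $\mathcal{L_S}$ with a proper normal subloop $\mathcal{L_N}$ and corresponding quotient $\mathcal{L_Q}$, I would fix a transversal $\tau \colon \mathcal{L_Q} \to \mathcal{L_S}$ with $\tau(\bar\Omega) = \Omega$, so that every element of $\mathcal{L_S}$ writes uniquely as $\tau(P) \cdot x$ with $x \in \mathcal{L_N}$. Normality guarantees that $(\tau(P) x)(\tau(Q) y) \in \tau(PQ) \mathcal{L_N}$, so $\Phi_{P, Q}(x, y)$ can be defined unambiguously as the unique $z \in \mathcal{L_N}$ with $(\tau(P) x)(\tau(Q) y) = \tau(PQ) z$. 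By construction, the bijection $(P, x) \mapsto \tau(P) \cdot x$ transports the product of $\mathcal{L_S}$ into $\circ$, so it suffices to verify that $\Phi$ is a Steiner operator: (i) from the normalization $\tau(\bar\Omega) = \Omega$; (ii) from commutativity of $\mathcal{L_S}$; (iii) from the exponent-two identity $a^2 = \Omega$; (iv) from the totally symmetric identity. Each $\Phi_{P, Q}$ is a Latin square because left multiplication in $\mathcal{L_S}$ by $\tau(P) x$ is a bijection from the coset $\tau(Q) \mathcal{L_N}$ onto $\tau(PQ) \mathcal{L_N}$.

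The main obstacle will be carrying out the converse in a non-associative setting: unlike for groups, one cannot absorb or re-bracket coset representatives at will, and each of the four conditions as well as the Latin square property must be read back from the loop axioms through the chosen transversal. The key input throughout is that normality (rather than mere closure under the operation) ensures that the product of two coset elements lies again in a single coset, so that $\Phi$ is well defined; commutativity of Steiner loops then makes (ii) automatic and forces $\Phi$ to be symmetric as required.
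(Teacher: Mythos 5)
Your proposal is correct and follows essentially the same route as the paper's proof: in the forward direction you match each of the four conditions of \Cref{SteinerOperator} to a loop axiom (identity via \Cref{eq:rmkidentity}, commutativity via (ii), total symmetry via (iv), solvability via the Latin square property), and in the converse you choose a section of the quotient map, write each element uniquely as a coset representative times an element of $\mathcal{L_N}$, and read the operator $\Phi$ off the multiplication, exactly as the paper does. The one genuine difference is the normality step: you obtain normality of $\overline{\mathcal{L_N}}$ by observing that the first-coordinate projection $(P,x)\mapsto P$ is a loop epimorphism onto $\mathcal{L_Q}$ with kernel $\overline{\mathcal{L_N}}$, invoking the kernel characterization of normal subloops; the paper instead verifies the normality identity $\bigl((Q,y)(R,z)\bigr)\circ(\bar\Omega,n)=(Q,y)\circ\bigl((R,z)(P,x)\bigr)$ directly and checks that the solution $(P,x)$ lies in $\overline{\mathcal{L_N}}$. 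Your kernel argument is shorter and simultaneously delivers the isomorphism of the quotient with $\mathcal{L_Q}$, while the paper's computation makes explicit which of the three normality relations is being used (only one is needed since the loop is commutative); both are valid.
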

	
	\begin{proof}
		
		Let $\mathcal{L_S}$ be defined by a Steiner operator $\Phi$ as above. If $(Q,y)$ and $(R, z)$ are two given elements in $\mathcal{L_S}$, then the equation
		$$(Q, y) \circ (P, x)=(R, z)$$
		has a unique solution $(P, x)$, where $P=QR $ and $x$ is the unique element in $\mathcal{L_N}$ such that
		$\Phi_{Q,QR}(y,x)=z$, that is, the column index of the element $z$ in row $y$ within the Latin square $\Phi_{Q,QR}$. 
		By \Cref{eq:rmkidentity}, the element $(\bar \Omega,\Omega')$ is the identity of $\mathcal{L_S}$. By condition ii) of \Cref{SteinerOperator}, the operation is commutative, by condition iii) $\mathcal{L_S}$ has exponent 2 and condition iv) is equivalent to $(P,x) \circ \big((P,x) \circ (Q, y))=(Q, y)$, that is, the totally symmetric property.	Thus, $\mathcal{L_S}$ is a Steiner loop. By Condition i), the subloop
		$$\overline{\mathcal{L_N}}=\{(\bar \Omega, x) \mid x\in \mathcal{L_N}\}$$
		is isomorphic to $\mathcal{L_N}$. For every $(Q,y),(R,z)\in\mathcal{L_S}$, $(\bar\Omega,n)\in\mathcal{L_N}$ the equation 
		\begin{equation}\label{eq:norm.st.op}
			\left(\left(Q,y\right) \left(R,z\right)\right)\circ \big(\bar\Omega,n\big) = \left(Q,y\right) \circ \left(\left(R,z\right) \left(P,x\right)\right),
		\end{equation}
		that is,
		\begin{equation*}
			\left(QR,\Phi_{Q,R}\left(y,z\right)\right)\circ \big(\bar\Omega,n\big) = \left(Q,y\right) \circ \left(RP,\Phi_{R,P}\left(z,x\right)\right),
		\end{equation*}
		is equivalent to 
		\begin{equation}\label{eq:norm.st.op2}
			\left(QR,\Phi_{QR,\bar\Omega}\left(\Phi_{Q,R}\left(y,z\right),n\right)\right)= \left(Q\cdot RP,\Phi_{Q,RP}\left(y,\Phi_{R,P}\left(z,x\right)\right)\right).
		\end{equation}
		\Cref{eq:norm.st.op2} implies $P=\bar\Omega$, that is, the solution $(P,x)$ of \Cref{eq:norm.st.op} belongs to $\mathcal{L_N}$ which, as a consequence, is normal. This argument proves the first part of the assertion.

		Conversely, consider a Steiner loop $\mathcal{L_S}$ with a normal subloop $\mathcal{L_N}$ and corresponding quotient loop $\mathcal{L_Q}$. Let $\pi\colon \mathcal{L_S}\longrightarrow\mathcal{L_Q}$ be the canonical epimorphism and $\sigma\colon\mathcal{L_Q}\longrightarrow \mathcal{L_S}$ a section with $\sigma(\mathcal{L_N})=\Omega$ and $\pi\sigma=\mathrm{id}_\mathcal{L_Q}$. Since for every $\pi(X)\in\mathcal{L_Q}$ it holds $\pi(X)=\pi(\sigma(\pi(X)))$, we have that 
		\begin{equation}\label{couplerepresentation}
			X=\sigma(\pi(X))\cdot x,
		\end{equation}
		with $x\in\mathcal{L_N}$.
		Since $\mathcal{L_N}$ is normal and by using the fact that  $\sigma(\pi(X))\sigma(\pi(Y))$ and $\sigma(\pi(X)\pi(Y))$ are in the same coset, we obtain that
		\begin{equation*}
			XY=\left(\sigma(\pi(X))\cdot x\right)\left(\sigma(\pi(Y))\cdot y\right)= \left(\sigma(\pi(X)\pi(Y))\right)\cdot \Phi_{\pi(X),\pi(Y)}(x,y )
		\end{equation*}
		for a suitable element $\Phi_{\pi(X),\pi(Y)}(x,y)$ of $\mathcal{L_N}$ depending on $\pi(X)$, $\pi(Y)$, $x$ and $y$. Since $\mathcal{L_S}$ is a loop, for any $\pi(X),\pi(Y)\in\mathcal{L_Q}$, $\Phi_{\pi(X),\pi(Y)}(-,-)$ defines a Latin square on $\mathcal{L_N}$. Thus, we can define an operator $\Phi\colon \mathcal{L_Q}\times\mathcal{L_Q}\longrightarrow\mathrm{LS}(\mathcal{L_N})$ such that $\Phi\colon (\pi(X),\pi(Y))\longmapsto\Phi_{\pi(X),\pi(Y)}$. Up to renaming the elements of $\mathcal{L_Q}$, every $X\in\mathcal{L_S}$ can be represented by a couple $(P,x)$ defined as in \Cref{couplerepresentation}, where $P=\pi(X)$. With this representation, the operation of $\mathcal{L_S}$ is the following:
		\begin{equation*}
			( P, x) \circ ( Q,y)=\left(PQ, \Phi_{P,Q}(x,y)\right).
		\end{equation*}
		Condition i) of \Cref{SteinerOperator} is trivially fulfilled since $x=(\bar\Omega,x)$ for every $x\in\mathcal{L_N}$. Condition ii) holds for commutativity, condition iii) comes from the exponent 2, and condition iv) reflects the totally symmetric property. 
\end{proof}
	
	In this case $\mathcal{L_S}$ is called an extension of $\mathcal{L_N}$ by $\mathcal{L_Q}$, and we say that the Steiner triple system $\mathcal{S}$ is an extension of $\mathcal{N}$ by $\mathcal{Q}$ as well.

	Roughly speaking, a Steiner operator $\Phi$ replaces the entry $PQ$ in the multiplication table of $\mathcal{L_Q}$ with the Latin square $\Phi_{P,Q}$.
		$$
		\;\begin{array}{c|ccc}
			& \dots  & Q & \dots  \\
			\hline
			\vdots  &   & \vdots  &    \\
			P& \cdots  & PQ & \cdots \\
			\vdots	&   &  \vdots &  
		\end{array}\;
		\quad \rightsquigarrow \quad
		\;\begin{array}{c|ccc}
			& \cdots  & \overbrace{}^{Q}  & \cdots  \\
			\hline
			\vdots  &   & \vdots  &    \\
			P \Big\{ & \dots  & \boxed{\Phi_{P,Q}}  & \dots \\
			\vdots	&   & \vdots  &  
		\end{array}\;$$
		In this way, we obtain the multiplication table of $\mathcal{L_S}$ by gluing together all the tables $\Phi_{P, Q}$ and recalling that in the first component of $\mathcal{L_S}=\mathcal{L_Q}\times\mathcal{L_N}$ we simply apply the multiplication of $\mathcal{L_Q}$.

	\begin{theorem}\label{thm:blocks}
		Consider an extension 
			\begin{equation*}
				\Omega' \to \mathcal{L_N} \to \mathcal{L_S} \to \mathcal{L_Q}  \to \bar\Omega,
			\end{equation*}
			of Steiner loops with $\mathcal{N}$ and $\mathcal{Q}$ of order $u$ and $w$ respectively. The $(u+1)(w+1) \times (u+1)(w+1)$ multiplication table of the Steiner loop $\mathcal{L_S}$ is completely determined by its $w+1$ diagonal symmetric blocks of size $(u+1) \times (u+1)$, and additional $\frac{w(w-1)}{6}$ blocks.
	\end{theorem}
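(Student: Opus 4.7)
The plan is to view the $((u+1)(w+1))^{2}$ multiplication table of $\mathcal{L_S}$ as a $(w+1)\times(w+1)$ grid of $(u+1)\times(u+1)$ Latin-square blocks $\Phi_{P,Q}$ indexed by $(P,Q)\in\mathcal{L_Q}\times\mathcal{L_Q}$, and then to show, using the four defining conditions of a Steiner operator in \Cref{SteinerOperator}, that beyond the $w+1$ diagonal blocks $\Phi_{P,P}$ only one further block per triple of $\mathcal{Q}$ is needed to recover the whole table.

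First I would use condition (ii), $\Phi_{Q,P}=\Phi_{P,Q}^{T}$, to pass from ordered to unordered pairs, which already halves the count of off-diagonal blocks. Next I would eliminate the $w$ off-diagonal pairs that contain $\bar\Omega$: for any $P\in\mathcal{Q}$, condition (iv) with $Q=\bar\Omega$ gives $\Phi_{P,P}(x,\Phi_{P,\bar\Omega}(x,y))=y$, so $\Phi_{P,\bar\Omega}(x,y)$ is the unique $z$ with $\Phi_{P,P}(x,z)=y$, which is well-defined precisely because $\Phi_{P,P}$ is a Latin square. Hence $\Phi_{P,\bar\Omega}$, and by (ii) also $\Phi_{\bar\Omega,P}$, is recovered from the diagonal block $\Phi_{P,P}$.

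What remains are the $\binom{w}{2}=\frac{w(w-1)}{2}$ unordered pairs $\{P,Q\}$ with $P,Q\in\mathcal{Q}$ and $P\neq Q$. Each such pair lies in a unique triple $\{P,Q,PQ\}$ of $\mathcal{Q}$, so these pairs partition into $\frac{w(w-1)}{6}$ groups of three. Fixing a triple $\{P,Q,R\}$ with $R=PQ$, condition (iv) yields $\Phi_{P,R}(x,\Phi_{P,Q}(x,y))=y$, so $\Phi_{P,Q}$ determines $\Phi_{P,R}$; swapping the roles of $P$ and $Q$ and using commutativity $QP=R$, the same argument recovers $\Phi_{Q,R}$ from $\Phi_{Q,P}=\Phi_{P,Q}^{T}$. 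Thus specifying any one of the three blocks in the triple forces the other two.

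The main obstacle is the careful bookkeeping needed to separate the role of $\bar\Omega$ from the ordinary points of $\mathcal{Q}$; once that case is handled, the inversion provided by condition (iv) is an immediate consequence of the Latin-square property of each $\Phi_{P,Q}$, and the combinatorial partition into triples then yields precisely the count $w+1$ diagonal blocks plus $\frac{w(w-1)}{6}$ additional blocks claimed in the statement.
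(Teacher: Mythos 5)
Your proof is correct and follows essentially the same route as the paper's: decompose the table into the blocks $\Phi_{P,Q}$, recover the blocks indexed by $\bar\Omega$ from the diagonal ones, and use the partition of off-diagonal pairs into triples of $\mathcal{Q}$ to show one block per triple suffices. You are in fact more explicit than the paper in identifying which axioms of \Cref{SteinerOperator} (namely (ii) and (iv), together with the Latin-square property) effect each determination, which is a welcome addition rather than a deviation.
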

	\begin{proof}
			Since the multiplication of $\mathcal{L_S}$ is given by
			\begin{equation*}
				(P,x) \circ ( Q,y)=\left(PQ, \Phi_{P,Q}(x,y)\right),
			\end{equation*}
			for a suitable Steiner operator $\Phi$, the multiplication table of $\mathcal{L_S}$ is described by the $(w+1)^2$ tables of size $(u+1)\times(u+1)$ corresponding to the Latin squares $\Phi_{P,Q}$, with $P,Q\in\mathcal{L_Q}$. Every Latin square $\Phi_{P,P}$ in the main diagonal uniquely determines the Latin squares $\Phi_{\bar \Omega, P}$ and $\Phi_{P, \bar \Omega}$. If $\{P,Q,R\}$ is a triple of $\mathcal{Q}$, then $\Phi_{P,Q}$ uniquely determines $\Phi_{P, R}$, $\Phi_{Q, R}$, and consequently $\Phi_{Q, P}$, $\Phi_{R, P}$, $\Phi_{R, Q}$ (see \Cref{tab:blocks} for a visual representation). Hence, once the blocks on the main diagonal are fixed, the remaining $w(w-1)$ blocks can be determined by specifying just $\frac{1}{6}$ of them.
			\begin{table}[H]
				\centering
				\begin{tabular}{c|c|c|c|c|c|c|c|c|}
					& $\bar \Omega$ & \dots & $ P$ & \dots & $Q$ &\dots & $R$ &\dots \\
					\hline
					$\bar\Omega$ &$\Phi_{\bar \Omega, \bar \Omega}$ &  & $\Phi_{\bar \Omega, P}$ &  & $\Phi_{\bar \Omega, Q}$ & & $\Phi_{\bar \Omega, R}$ &\dots \\
					\hline
					\vdots & & $\ddots$ & & & & & & \\
					\hline
					$P$ & $\Phi_{P, \bar \Omega}$ & & $\Phi_{P, P}$ & & \textcolor{blue}{$\Phi_{P, Q}$} & & \textcolor{blue}{$\Phi_{P, R}$} & \\
					\hline
					\vdots & & & & $\ddots$ & & & & \\
					\hline
					$Q$ & $\Phi_{Q, \bar \Omega}$ & & \textcolor{blue}{$\Phi_{Q, P}$} & & $\Phi_{Q, Q}$ & & \textcolor{blue}{$\Phi_{Q, R}$} & \\
					\hline
					\vdots & & & & &  & $\ddots$ & & \\
					\hline
					$R$ & $\Phi_{R, \bar \Omega}$ & & \textcolor{blue}{$\Phi_{R, P}$} & & \textcolor{blue}{$\Phi_{R, Q}$} & & $\Phi_{R, R}$ & \\
					\hline
					\vdots & & & & & & &  & $\ddots$\\
					\hline
				\end{tabular}
				\caption{Multiplication table of $\mathcal{L_S}$}
				\label{tab:blocks}
			\end{table}
	\end{proof}

		\begin{example}\label{sts19}
				Here we construct an $\mathrm{STS}(19)$, denoted with $\mathcal{S}$, containing a projective hyperplane $\mathcal{N}$ which must be isomorphic to the unique $\mathrm{STS}(9)$. The corresponding Steiner loop $\mathcal{L_N}$ is a normal subloop $\mathcal{L_S}$ of index $2$ in the Steiner loop of order $20$. We denote the corresponding quotient loop of order $2$ by $\mathcal{L_Q}=\{\bar\Omega, \bar 1 \}$. For $\mathcal{L_N}$ we fix the multiplication given by \Cref{tab:multLN}.
				
				\begin{table}[H]
						\centering
						\begin{tabular}{c|cccccccccc}
								& $\Omega'$ & $1$ & $2$ & $3$ & $4$ & $5$ & $6$ & $7$ & $8$ & $9$ \\
								\hline
								$\Omega'$ & $\Omega'$ & $1$ & $2$ & $3$ & $4$ & $5$ & $6$ & $7$ & $8$ & $9$ \\
								$1$ & $1$ & $\Omega'$ & $3$ & $2$ & $7$ & $9$ & $8$ & $4$ & $6$ & $5$ \\
								$2$ & $2$ & $3$ & $\Omega'$ & $1$ & $9$ & $8$ & $7$ & $6$ & $5$ & $4$ \\
								$3$ & $3$ & $2$ & $1$ & $\Omega'$ & $8$ & $7$ & $9$ & $5$ & $4$ & $6$ \\
								$4$ & $4$ & $7$ & $9$ & $8$ & $\Omega'$ & $6$ & $5$ & $1$ & $3$ & $2$ \\
								$5$ & $5$ & $9$ & $8$ & $7$ & $6$ & $\Omega'$ & $4$ & $3$ & $2$ & $1$ \\
								$6$ & $6$ & $8$ & $7$ & $9$ & $5$ & $4$ & $\Omega'$ & $2$ & $1$ & $3$ \\
								$7$ & $7$ & $4$ & $6$ & $5$ & $1$ & $3$ & $2$ & $\Omega'$ & $9$ & $8$ \\
								$8$ & $8$ & $6$ & $5$ & $4$ & $3$ & $2$ & $1$ & $9$ & $\Omega'$ & $7$ \\
								$9$ & $9$ & $5$ & $4$ & $6$ & $2$ & $1$ & $3$ & $8$ & $7$ & $\Omega'$
							\end{tabular}
						\caption{Multiplication table of $\mathcal{L_N}$}\label{tab:multLN}
					\end{table}
				The Latin square $\Phi_{\bar 1,\bar 1}$ is a symmetric table with the identity element $\Omega'$ occurring in the main diagonal. We choose, for instance, $\Phi_{\bar 1,\bar 1}$ to be given by \Cref{tab:phi11}.
				\begin{table}[H]
						\centering
						\begin{tabular}{c|cccccccccc}
								& $\Omega'$  & $1$  & $2$  & $3$ & $4$  & $5$  & $6$ & $7$ & $8$ & $9$  \\
								\hline
								$\Omega'$  & $\Omega'$  & $7$  & $6$  & $5$ & $4$  & $9$  & $8$ & $2$ & $1$ & $3$  \\
								$1$  & $7$  & $\Omega'$  & $5$  & $6$  & $2$  & $8$ & $9$ & $4$ & $3$ & $1$   \\
								$2$  & $6$  & $5$  &  $\Omega'$ & $7$  & $8$ & $2$  & $1$  & $3$ & $4$ & $9$  \\
								$3$	& $5$  & $6$  & $7$  & $\Omega'$ & $1$  & $3$  & $4$  & $9$ & $8$ & $2$  \\
								$4$	& $4$ & $2$  & $8$  & $1$ & $\Omega'$ & $5$  & $3$ & $7$ & $9$ & $6$ \\
								$5$ & $9$ & $8$ & $2$ & $3$ & $5$ &  $\Omega'$ & $7$  & $1$  & $6$  & $4$  \\
								$6$	& $8$  & $9$ & $1$  & $4$  & $3$  & $7$ & $\Omega'$ & $6$ & $2$  & $5$ \\
								$7$ & $2$ & $4$ & $3$ & $9$ & $7$ & $1$ & $6$ &  $\Omega'$ & $5$ & $8$ \\
								$8$ & $1$ & $3$ & $4$ & $8$ & $9$ & $6$ & $2$ & $5$ & $\Omega'$ & $7$ \\
								$9$ & $3$ & $1$ & $9$ & $2$ & $6$ & $4$ & $5$ & $8$ & $7$ & $\Omega'$
							\end{tabular}
						\caption{$\Phi_{\bar 1,\bar 1}$}\label{tab:phi11}
					\end{table}
				Each of the $45$ entries in the upper triangular part of $\Phi_{\bar 1,\bar 1}$ determines a triple of the $\mathrm{STS}(19)$. For instance, we can read from the table that
				\begin{equation}
						(\bar 1,4)\circ(\bar 1,1) = (\bar \Omega,2),
					\end{equation}
				meaning that $\{(\bar 1,4), (\bar 1,1), (\bar \Omega,2)\}$ is a triple of $\mathcal{S}$. Therefore, $\Phi_{\bar\Omega, \bar 1}(2,1)=4$ and $\Phi_{\bar\Omega, \bar 1}(2,4)=1$. In this way we find $45$ triples of $\mathcal{S}$, each of which gives two entries in the Latin square $\Phi_{\bar \Omega,\bar 1}$. Hence, the Latin square $\Phi_{\bar \Omega,\bar 1}$ is thoroughly determined as \Cref{tab:phiom1}.
				\begin{table}[H]
						\centering
						\begin{tabular}{c|cccccccccc}
								& $\Omega'$  & $1$  & $2$  & $3$ & $4$  & $5$  & $6$ & $7$ & $8$ & $9$  \\
								\hline
								$\Omega'$  & $\Omega'$  & $1$  & $2$  & $3$ & $4$  & $5$  & $6$ & $7$ & $8$ & $9$   \\
								$1$  & $8$  & $9$  & $6$  & $4$  & $3$  & $7$ & $2$ & $5$ & $\Omega'$ & $1$   \\
								$2$  & $7$  & $4$  & $5$ & $9$  & $1$ & $2$  & $8$  & $\Omega'$ & $6$ & $3$  \\
								$3$	& $9$  & $8$  & $7$  & $5$ & $6$  & $3$  & $4$  & $2$ & $1$ & $\Omega'$  \\
								$4$	& $4$ & $7$  & $8$  & $6$ & $\Omega'$ & $9$  & $3$ & $1$ & $2$ & $5$ \\
								$5$ & $3$ & $2$ & $1$ & $\Omega'$ & $5$ &  $4$ & $9$  & $8$  & $7$  & $6$  \\
								$6$	& $2$  & $3$ & $\Omega'$  & $1$  & $9$  & $8$ & $7$ & $6$ & $5$  & $4$ \\
								$7$ & $1$ & $\Omega'$ & $3$ & $2$ & $7$ & $6$ & $5$ &  $4$ & $9$ & $8$ \\
								$8$ & $6$ & $5$ & $4$ & $8$ & $2$ & $1$ & $\Omega'$ & $9$ & $3$ & $7$ \\
								$9$ & $5$ & $6$ & $9$ & $7$ & $8$ & $\Omega'$ & $1$ & $3$ & $4$ & $2$
							\end{tabular}
						\caption{$\Phi_{\bar \Omega,\bar 1}$}\label{tab:phiom1}
					\end{table}
				Note that the entries in $\Phi_{\bar \Omega,\bar \Omega}$, which is the multiplication table of $\mathcal{L_N}$, yield the $12$ triples of the hyperplane $\mathcal{N}$, thus we have all of the $57$ triples of the $\mathrm{STS}(19)$. 
				
				The elements of $\mathcal{L_S}$ are represented by couples $(P, x)$ in $\mathcal{L_Q}\times\mathcal{L_N}$ and the multiplication table of
				$\mathcal{L_S}$ is given by the four $10\times 10$ block matrices $\Phi_{P,Q}$.
				\begin{table}[H]
						\centering
						$\mathcal{L_S}:$
						\begin{tabular}{c|c}
								$\Phi_{\bar \Omega,\bar \Omega}$ & $\Phi_{\bar 1,\bar \Omega}$ \\
								\hline
								$\Phi_{\bar \Omega,\bar 1}$ & $\Phi_{\bar 1,\bar 1}$
							\end{tabular}
					\end{table}
				
			\end{example}
	
	\medskip
	
	In the same way as for Schreier extensions (see \Cref{def:equivalent} and \Cref{def:isomorphic}), we can define the concepts of \emph{isomorphic} and \emph{equivalent} extensions in the general case. 

	If two extensions are equivalent (respectively, isomorphic), we say that the corresponding Steiner operators are equivalent (respectively, isomorphic) as well.  
	
	While the concept of equivalent extensions of Steiner triple systems may seem less intuitive than that of isomorphic ones, the following theorem proves their natural emergence when applying a specific class of \emph{isotopies} to the Latin squares associated with Steiner operators. Indeed, Latin squares are studied up to isotopy, and this highlights, in our opinion, the relevance of equivalent extensions in this context.
	
	We recall that an \emph{isotopy} between two Latin squares $L_1$ and $L_2$ of the same size is a triple of bijections $(\gamma_1,\gamma_2,\gamma_3)$ such that $\gamma_3(L_1(i,j))=L_2(\gamma_1(i),\gamma_2(j))$. This means that $L_2$ is obtained by $L_1$ with a permutation of rows $\gamma_1$, a permutation of columns $\gamma_2$, and a permutation of symbols $\gamma_3$. 
	
	We have the following result. 
	
	\begin{theorem}
		Two Steiner operators $\Phi,\Psi \colon \mathcal{L_Q}\times \mathcal{L_Q} \to \mathrm{LS}(\mathcal{L_N})$ are equivalent if and only if there exists a map
		\begin{align*}
			\gamma\colon \mathcal{L_Q} &\longrightarrow \mathrm{Sym}(\mathcal{L_N})\\
			P &\longmapsto \gamma_P,
		\end{align*}
		with $\gamma_\Omega=\mathrm{id}$, such that, for every $P,Q\in\mathcal{L_Q}$, the triple of bijections $(\gamma_P,\gamma_Q,\gamma_{PQ})$ is an isotopy between the Latin squares $\Phi_{P,Q}$ and $\Psi_{P,Q}$.
	\end{theorem}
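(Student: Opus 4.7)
The plan is to mirror, in the broader setting of Steiner operators, the argument used in \Cref{lemmaequivalence} for Schreier extensions. The notion of equivalent extensions in this context is obtained by transferring \Cref{def:equivalent}: the two extensions are equivalent exactly when there exists a Steiner loop isomorphism $F\colon \mathcal{L}_{\mathcal{S}_1}\to \mathcal{L}_{\mathcal{S}_2}$ between the extensions built from $\Phi$ and $\Psi$ which induces the identity on $\overline{\mathcal{L_N}}$ and on the quotient $\mathcal{L_Q}$. The family $\gamma$ will emerge naturally as the fiberwise restrictions of $F$.

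For the forward direction, I would first determine the shape of any such $F$. The requirement of inducing the identity on the quotient means that $F$ preserves the cosets of $\overline{\mathcal{L_N}}$, and one checks directly from \Cref{Extensions} that the coset of $(P,x)$ is precisely $\{(P,y):y\in\mathcal{L_N}\}$. Hence $F$ fixes the first coordinate, so there exist maps $\gamma_P\colon \mathcal{L_N}\to\mathcal{L_N}$ with $F(P,x)=(P,\gamma_P(x))$; bijectivity of $F$ on each fiber forces $\gamma_P\in\mathrm{Sym}(\mathcal{L_N})$. Inducing the identity on $\overline{\mathcal{L_N}}$ then becomes $\gamma_{\bar\Omega}=\mathrm{id}$. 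Plugging $F(P,x)=(P,\gamma_P(x))$ into the homomorphism condition $F((P,x)\circ_1(Q,y))=F(P,x)\circ_2 F(Q,y)$ and using the multiplication rule $(P,x)\circ(Q,y)=(PQ,\Phi_{P,Q}(x,y))$ (and its analogue for $\Psi$), I would obtain the identity
\[
\gamma_{PQ}\bigl(\Phi_{P,Q}(x,y)\bigr)=\Psi_{P,Q}\bigl(\gamma_P(x),\gamma_Q(y)\bigr),
\]
which is exactly the isotopy relation realized by the triple $(\gamma_P,\gamma_Q,\gamma_{PQ})$ between the Latin squares $\Phi_{P,Q}$ and $\Psi_{P,Q}$.

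For the converse, given a family $\{\gamma_P\}_{P\in\mathcal{L_Q}}$ of permutations of $\mathcal{L_N}$ with $\gamma_{\bar\Omega}=\mathrm{id}$ and satisfying the isotopy relation above for all $P,Q\in\mathcal{L_Q}$, I would set $F(P,x):=(P,\gamma_P(x))$ and verify that $F$ realises an equivalence. Bijectivity is immediate from the bijectivity of each $\gamma_P$; the isotopy identity is precisely the homomorphism property for the operations defined by $\Phi$ and $\Psi$; the condition $\gamma_{\bar\Omega}=\mathrm{id}$ makes $F$ fix $\overline{\mathcal{L_N}}$ pointwise; and preservation of the first coordinate makes $F$ induce the identity on the quotient $\mathcal{L_Q}$. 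Hence $\Phi$ and $\Psi$ are equivalent.

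The main obstacle is conceptually mild but technically important: one must verify that any equivalence $F$ is forced to take the fiberwise form $(P,x)\mapsto (P,\gamma_P(x))$. This requires making explicit how the cosets of $\overline{\mathcal{L_N}}$ sit inside $\mathcal{L_Q}\times\mathcal{L_N}$, and it is here that the extension structure (as opposed to a mere epimorphism onto $\mathcal{L_Q}$) is used. Once this ansatz is in place, translating the homomorphism equation into the isotopy identity, and back, is a direct one-line calculation.
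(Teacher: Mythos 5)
Your proposal is correct and follows essentially the same route as the paper: both directions rest on showing that an equivalence must have the fiberwise form $(P,x)\mapsto(P,\gamma_P(x))$ and that the homomorphism condition then translates verbatim into the isotopy identity $\gamma_{PQ}(\Phi_{P,Q}(x,y))=\Psi_{P,Q}(\gamma_P(x),\gamma_Q(y))$. Your extra care in justifying the fiberwise ansatz via coset preservation is a slightly more explicit version of a step the paper states without elaboration.
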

	
	\begin{proof}
		Let $\mathcal{L}_{\mathcal{S}_1} $ and $\mathcal{L}_{\mathcal{S}_2}$ be the two equivalent extensions corresponding to $\Phi$ and $\Psi$, respectively, and $\varphi\colon \mathcal{L}_{\mathcal{S}_1} \to \mathcal{L}_{\mathcal{S}_2}$ an isomorphism realizing the equivalence. Since $\mathcal{L_Q}$ and $\mathcal{L_N}$ are fixed pointwise by $\varphi$, we have that, for every $P\in \mathcal{L_Q}$
		\begin{equation*}
			\varphi (P,x)=(P,\gamma_P(x)),
		\end{equation*}
		where $\gamma_P\in\mathrm{Sym}(\mathcal{L_N})$ and $\gamma_\Omega=\mathrm{id}$.
		On the one hand, 
		\begin{equation*}
			\varphi\left((P,x)  (Q,y)\right)=\varphi \left(\left(PQ, \Phi_{P,Q}(x,y)\right)\right)=\left(PQ, \gamma_{PQ}\left(\Phi_{P,Q}(x,y)\right)\right).
		\end{equation*}
		On the other hand, 
		\begin{equation*}
			\varphi\left((P,x)  (Q,y)\right)=	\varphi\left(P,x\right)  \varphi\left(Q,y\right)=(P,\gamma_P(x))  (Q,\gamma_Q(y))=
			\left(PQ, \Psi_{P,Q}(\gamma_P(x),\gamma_Q(y))\right).
		\end{equation*}
		Hence, 
		\begin{equation*}
			\gamma_{PQ}\left(\Phi_{P,Q}(x,y)\right)= \Psi_{P,Q}(\gamma_P(x),\gamma_Q(y)),
		\end{equation*}
		that is, for every $P,Q\in\mathcal{L_Q}$, the triple $(\gamma_P,\gamma_Q,\gamma_{PQ})$ is an isotopy between the Latin squares $\Phi_{P,Q}$ and $\Psi_{P,Q}$.
		
		On the contrary, suppose there exists a map
		$\gamma\colon \mathcal{L_Q} \longrightarrow \mathrm{Sym}(\mathcal{L_N})$, $P \longmapsto \gamma_P$, with $\gamma_\Omega=\mathrm{id}$, such that, for every $P,Q\in\mathcal{L_Q}$, the triple $(\gamma_P,\gamma_Q,\gamma_{PQ})$ is an isotopy between the Latin squares $\Phi_{P,Q}$ and $\Psi_{P,Q}$. Then the map
	\begin{align*}
		\varphi \colon \mathcal{L}_{\mathcal{S}_1} &\longrightarrow \mathcal{L}_{\mathcal{S}_2}\\
		(P,x) &\longmapsto (P,\gamma_P(x)),
	\end{align*}
		is an isomorphism inducing the identity homomorphism on both $\mathcal{L_Q}$ and $\mathcal{L_N}$. Indeed,
		\begin{align*}
		\varphi\left((P,x)  (Q,y)\right)&=\varphi \left(\left(PQ, \Phi_{P,Q}(x,y)\right)\right)=\left(PQ, \gamma_{PQ}\left(\Phi_{P,Q}(x,y)\right)\right)\\
		&=\left(PQ, \Psi_{P,Q}(\gamma_P(x),\gamma_Q(y))\right)\\
		&=\varphi\left(P,x\right)  \varphi\left(Q,y\right).
		\end{align*}

	\end{proof}

\bigskip

\noindent
\textbf{Data availability}\\
We do not analyse or generate any datasets, because our work proceeds within a theoretical and mathematical approach. One can obtain the relevant materials from the references below.

\bigskip 

\noindent
\textbf{Conflict of interest statement}\\
All authors have no conflicts of interest.

\printbibliography
\end{document}